\theoremstyle{plain}
\newtheorem{theorem}{Theorem}%[section]
\theoremstyle{remark}
\newtheorem{remark}{Remark}
\theoremstyle{definition}
\newtheorem{definition}{Definition}
\theoremstyle{problem}
\newtheorem{problem}{Problem} 
\title{ABAMGuid+: An Enhanced Aerocapture Guidance Framework using Augmented Bank Angle Modulation}
\author{Kyle A. Sonandres \footnote{Graduate Research Assistant, Department of Aeronautics and Astronautics, Cambridge, MA, 02139, USA}}
\affil{Massachusetts Institute of Technology, Draper Scholar, Cambridge, MA, 02139}
\author{Thomas R. Palazzo\footnote{Senior Member of the Technical Staff, The Charles Stark Draper Laboratory, Inc., Cambridge, MA, 02139}}
\affil{The Charles Stark Draper Laboratory, Cambridge, MA, 02139, USA}
\author{Jonathan P. How\footnote{Richard C. Maclaurin Professor of Aeronautics and Astronautics, Department of Aeronautics and Astronautics, Cambridge, MA, 02139, USA}}
\affil{Massachusetts Institute of Technology, Cambridge, MA, 02139}
\begin{document}

\maketitle

\begin{abstract}
Aerocapture consists of converting a hyperbolic approach trajectory into a captured target orbit utilizing the aerodynamic forces generated via a single pass through the atmosphere. Aerocapture guidance systems must be robust to significant environmental variations and modeling uncertainty, particularly regarding atmospheric properties and delivery conditions. Recent work has shown that enabling control over both bank angle and angle of attack, a strategy referred to as augmented bank angle modulation (ABAM), can improve robustness to entry state and atmospheric uncertainties. In this work, we derive optimal control solutions for an aerocapture vehicle using ABAM. We first formulate the problem using a linear aerodynamic model and derive closed-form optimal control profiles using Pontryagin's Minimum Principle. To increase modeling fidelity, we also consider a quadratic aerodynamic model and obtain the solution directly using the optimality conditions. Both formulations are solved numerically using Gauss pseudospectral methods (via GPOPS, a software tool for pseudospectral optimal control), to validate the analytic solutions. We then introduce a novel aerocapture guidance algorithm, ABAMGuid+, which indirectly minimizes propellant usage by mimicking the structure of the optimal control solution, enabling efficient guidance by avoiding the complexity of solving the full optimal control problem online. Extensive Monte Carlo simulations of a Uranus aerocapture mission demonstrate that ABAMGuid+ increases capture success rates and reduces post-capture propellant requirements relative to previous methods. 
\end{abstract}

\section{Introduction}
\lettrine{A}{erocapture}
is an orbital insertion maneuver that converts a hyperbolic approach trajectory into a desired orbit via a single atmospheric pass. Following the atmospheric pass, a periapsis raise and an apoapsis correction, if necessary, are performed to enter the final target orbit. This is in contrast to traditional fully-propulsive insertion, in which a propulsive burn is used to enter a target orbit, and aerobraking, a  maneuver that begins with a traditional impulsive insertion into a highly elliptical orbit, followed by many passes through the atmosphere to lower the apoapsis altitude to the final target orbit. The aerocapture maneuver is illustrated in Fig. \ref{fig: aerocapture maneuver}. 

Eliminating reliance on an impulsive insertion burn has broad design implications. By controlling the aerodynamic accelerations appropriately during the atmospheric pass, the vehicle exits the atmosphere in an orbit that requires much less propellant consumption when compared to a traditional propulsive insertion maneuver. For missions to the ice giants, such as the Uranus Orbiter and Probe (UOP) flagship mission concept, aerocapture is an enabling technology. Baseline mission concepts utilizing traditional, fully-propulsive orbital insertion require a cruise time of 13-15 years and necessitate carrying large amounts of propellant ($\approx$ 1 km/s $\Delta V$) \cite{national2022origins}. By enabling higher cruise and arrival speeds, aerocapture can reduce flight time by 2-5 years while requiring less propellant, reducing launch mass and/or leaving more mass for the scientific payload \cite{restrepo2024mission}. For these reasons, NASA has been studying the feasibility of aerocapture at Uranus \cite{restrepo2024mission, shellabarger2024aerodynamic, scoggins2024aeroheating, matz2024analysis, dutta2024uranus}. The benefits of aerocapture are not unique to Uranus, and studies have been conducted at Mars \cite{wright2006mars,hamel2006improvement}, Neptune \cite{girija2020feasibility, lockwood2004neptune}, and Venus \cite{lockwood2006systems, girija2020feasibilityVenus}, among others, for similar reasons. 

Aerocapture is generally considered to introduce elevated risk, and guidance systems must be robust to large environmental and modeling uncertainties. The most widely studied aerocapture guidance methodology is bank angle modulation (BAM), in which the bank angle ($\sigma$) is used to rotate the lift vector around the free-stream velocity vector ($V_{\infty}$) while keeping the vehicle at a constant angle of attack ($\alpha$), lift ($L$), and drag ($D$). Such control architecture originates from early Apollo-era entry guidance algorithms \cite{moseley1969apollo}, \cite{graves1972apollo}. Over the past few decades, predictor-corrector algorithms have been widely adopted due to their ability to handle large dispersions while remaining computationally tractable for online applications \cite{lu2014entry}. PredGuid+A \cite{lafleur2011conditional} is a numerical predictor-corrector (NPC) method adapted from the skip-entry algorithm PredGuid \cite{putnam2008improving} specifically for aerocapture applications. More recently, FNPAG \cite{lu2015optimal} is a two-phase NPC algorithm designed to fly the optimal bang-bang bank angle profile, solving for the switching time in phase one and a constant bank angle in phase two to meet the apoapsis targeting constraint.

Bank angle modulation is typically selected due to its flight heritage \cite{moseley1969apollo, nelessen2019mars, way2007mars}. However, it imposes fundamental performance restrictions. By restricting the guidance system to a single control variable, the vehicle is under-actuated and is therefore increasingly susceptible to disturbances and modeling error. Recent interest has been placed in exploring alternative control strategies to potentially improve the vehicle's ability to respond to such uncertainty. One such approach is direct force control (DFC), an alternative control architecture that uses the angle of attack ($\alpha$) and side-slip angle ($\beta$) to change the lift, drag, and side-force ($Q$). An NPC algorithm for DFC is developed in \cite{deshmukh2020investigation} that modifies FNPAG for DFC application. A similar approach is deployed in \cite{matz2020development}, but it is found that the analytic solution does not exist without certain assumptions on the aerodynamics. The authors numerically find that the best solution is bang-bang in $\alpha$ and implement a two-phase NPC structure to command $\alpha$ while controlling $\beta$ with a PD controller to drive the orbit into a specified plane. Further numerical analysis is done in \cite{geiser2022optimal}, where the authors conclude that a bang-bang angle of attack solution is a good approximation to the optimal solution. 

Another approach, referred to as augmented bank angle modulation (ABAM) in our previous work \cite{sonandres2025aerocapture}, is a two-input control strategy that uses bank angle to control the direction of the lift vector, and angle of attack to modulate the lift and drag magnitudes. While several prior studies have explored the potential benefits of augmenting BAM with angle of attack for aerocapture or entry guidance \cite{harpold1978shuttle, lafleur2009angle, jits2004blended, starr2004aerocapture, jits2005closed}, these approaches have remained largely heuristic in nature. However, these works generally find that including angle of attack control can lead to performance gains. More recently, an embedded NPC strategy based on FNPAG is proposed for ABAM \cite{palazzo2025hybrid}, which is shown to significantly expand entry corridor width in comparison to FNPAG. Our previous work \cite{sonandres2025aerocapture} introduced a formalized framework for this approach, which is significantly improved in this work. We previously presented three main contributions. First, we derived closed form optimal angle of attack and bank angle trajectory solutions, assuming a linear aerodynamic model, showing that the result can be written in terms of two switching curves (the choice of which depends on $\sigma$) that predict when/ how the $\alpha$ switching sequence should occur. Second, we solved the problem numerically using GPOPS, validating the analytic solution subject to the linear assumption. Third, we presented ABAMGuid, a four-phase aerocapture guidance algorithm designed to fly the optimal control solution, and presented Monte Carlo analysis that demonstrated capture success rate and entry corridor width improvements in comparison to FNPAG. In this paper, we extend each contribution as follows:
\begin{enumerate}
	\item We formulate the ABAM optimal control problem using a higher-fidelity quadratic aerodynamic model, presenting a closed-form solution that describes the optimal $\sigma$ and $\alpha$ profiles as a function of state and costate variables. 
	\item The Gauss pseudospectral method is used to numerically solve the quadratic optimal control problem. The numerical results are shown to be consistent with the analytic theory, verifying the closed-form solution.
	\item A new four-phase guidance algorithm inspired by both optimal control solutions, ABAMGuid+, which improves upon ABAMGuid by performing unsaturated $\alpha$ and $\sigma$ control via continuous alpha-sigma modulation (CASM), a novel algorithm subprocess, is developed. In CASM, the two-variable optimization problem is reduced to a univariate root finding problem, enabling efficient two-channel control and improving the algorithm's ability to respond to dispersions with minimal added computational cost.
	\item Additional Monte Carlo analysis is conducted, considering effects of imperfect state information. In baseline entry scenarios, ABAMGuid+ leads to a 65\% reduction in 3-$\sigma$ $\Delta V$ and a 45\% reduction in 99-th \%-ile $\Delta V$ relative to ABAMGuid. For conservative entry flight path angle uncertainties, we observe up to a 50\% reduction in cases which fail to be captured into the desired orbit relative to FNPAG, and a 23\% improvement in mean $\Delta V$ compared to ABAMGuid. These results are supported by 144,000 Monte Carlo simulations, demonstrating the robustness of ABAMGuid+ across a variety of uncertainty conditions.
\end{enumerate} 

\begin{figure}[th!]
	\centering 
	\begin{minipage}[t!]{0.6\textwidth}
		\centering 
		\includegraphics[width=\textwidth]{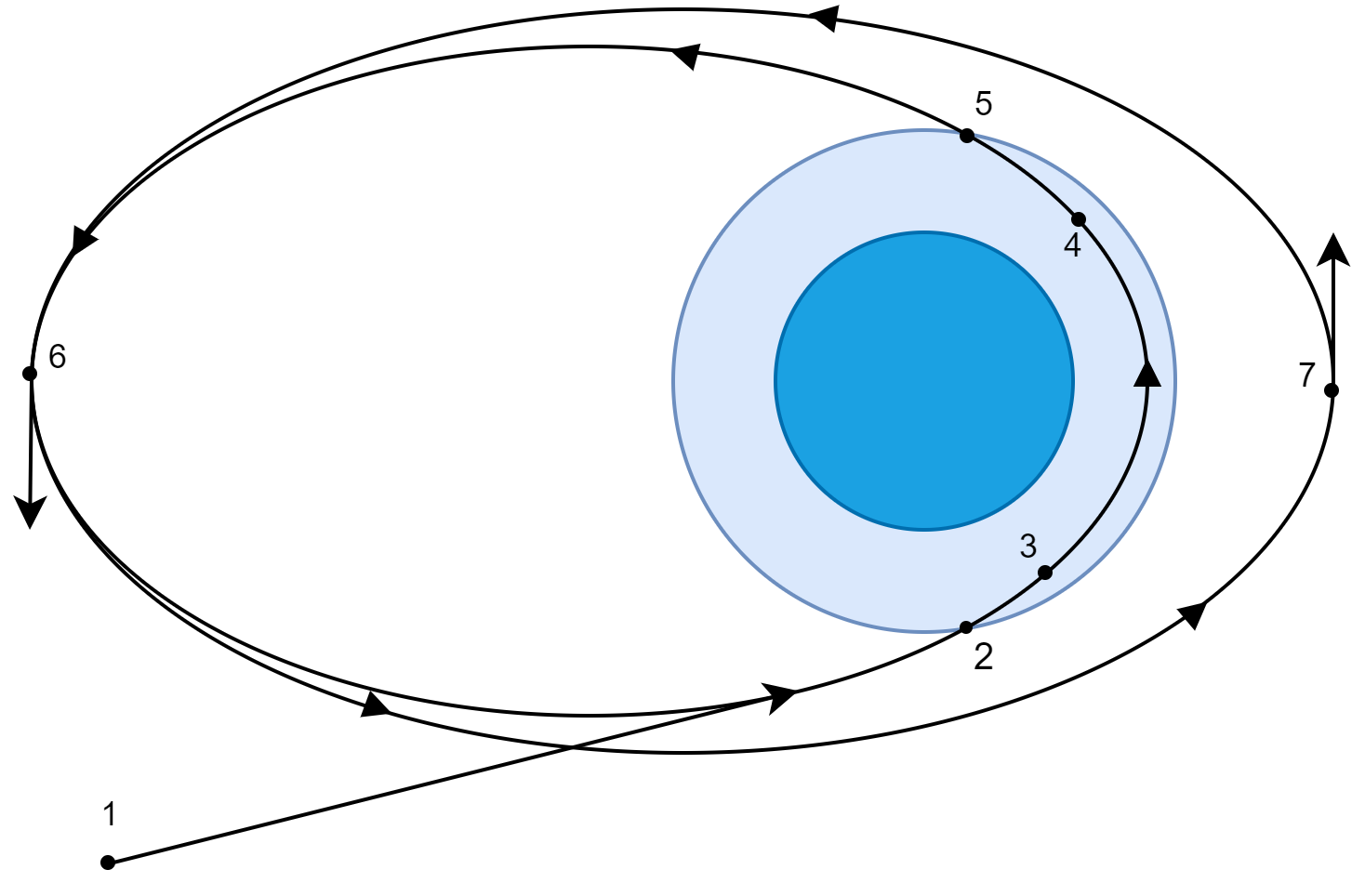}
		\label{fig: aerocapture figure}
	\end{minipage}%
	\hspace{0.05\textwidth}
	\begin{minipage}[t!]{0.29\textwidth}
		\small
		\centering 
		\begin{tabular}{|c|c|}
			\hline
			1 & Hyperbolic Approach Trajectory \\
			2 & Atmospheric Entry Interface \\
			3 & Active Guidance Begins \\
			4 & Active Guidance Ends \\
			5 & Atmospheric Exit \\
			6 & Periapsis Raise \\
			7 & Apoapsis Correction \\
			\hline
		\end{tabular}
	\end{minipage}
	\caption{Aerocapture maneuver. The spacecraft enters along a hyperbolic trajectory (1), entering the target planet's atmosphere at the atmospheric entry interface (2). The guidance system actively controls the vehicle once sufficient control authority is detected (3), continuing until the end of active guidance (4). The vehicle exits the atmosphere (5) in an elliptical orbit. Subsequent maneuvers, periapsis raise (6) and apoapsis correction (7), establish the desired orbit. } \label{fig: aerocapture maneuver}
\end{figure}

\section{Augmented Bank Angle Modulation Aerocapture Problem Formulation}
In this section we define the optimal control problems that motivate the development of ABAMGuid+. Later sections analyze the solution structure and formulate an efficient four-phase guidance algorithm based on the structure.
\subsection{General ABAM Aerocapture Problem}
The three-dimensional equations of motion of a spacecraft inside the atmosphere of a rotating planet are \cite{miele1989optimal}
\begin{align}
	\dot{r} &= V \sin \gamma, \label{eq: rdotfull} \\
    \dot{\theta} &= \frac{V \cos \gamma \sin \psi}{r \cos \phi}, \\
    \dot{\phi} &= \frac{V \cos \gamma \cos \psi}{r}, \\
    \dot{V} &= -D(\alpha)-g_r \sin \gamma-g_\phi \cos \gamma \cos \psi +\Omega^2 r \cos \phi(\sin \gamma \cos \phi-\cos \gamma \sin \phi \cos \psi) , \\   	
    	\dot{\gamma} &= \frac{1}{V}\bigl[ L(\alpha) \cos \sigma+\left(V^2 / r-g_r\right) \cos \bigr. \gamma+g_\phi \sin \gamma \cos \psi +2 \Omega V \cos \phi \sin \psi  \\  
    		&\bigl.+\Omega^2  r \cos \phi(\cos \gamma \cos \phi v  +\sin \gamma \cos \psi \sin \phi) \bigr], \nonumber \\
    	\dot{\psi} &= \frac{1}{V}\bigl[ \frac{L(\alpha) \sin \sigma}{\cos \gamma}+\frac{V^2}{r} \cos \gamma \sin \psi \tan \phi+g_\phi \frac{\sin \psi}{\cos \gamma} 		\bigr. -2 \Omega V(\tan \gamma \cos \psi \cos \phi-\sin \phi) \label{eq: psidotfull} \\ 
    		& \bigl.+\frac{\Omega^2 r}{\cos \gamma} \sin \psi \sin \phi \cos \phi\bigr], \nonumber
\end{align}
where $r$ is the vehicle's radial distance from the center of the planet, $\theta$ is longitude, $\phi$ is latitude, $V$ is the vehicle's planet-relative velocity, $\gamma$ is the flight path angle of the velocity vector, and $\psi$ is the heading angle of the velocity vector. The planet's angular rotation rate is $\Omega$, and $g_r$ and $g_\theta$ are the radial and gravitational acceleration components, respectively, given as
\begin{equation}
\begin{gathered}
g_r=\frac{\mu}{r^2}\left[1+J_2\left(\frac{R_0}{r}\right)^2\left(1.5-4.5 \sin ^2 \phi\right)\right], \quad g_\phi=\frac{\mu}{r^2}\left[J_2\left(\frac{R_0}{r}\right)^2(3 \sin \phi \cos \phi)\right],
\end{gathered}
\end{equation}
where $J_2$ is the planet's zonal coefficient and $\mu$ is the gravitational parameter. The lift ($L$) and drag ($D$) accelerations are
\begin{equation} \label{eq: LD}
	L(\alpha) = \frac{1}{2m} \rho V^2 S C_L(\alpha), \quad
	D(\alpha) = \frac{1}{2m} \rho V^2 S C_D(\alpha),
\end{equation}
where atmospheric density is defined by $\rho$, $S$ is the vehicle's reference area, and $m$ is the vehicle's mass. The bank angle, $\sigma$, is the vehicle's rotation around the velocity vector. In traditional bank angle modulation (BAM) based aerocapture guidance, bank angle is taken as the sole control variable, while angle of attack, $\alpha$, is assumed to be fixed or prescribed as a function of velocity. In augmented bank angle modulation (ABAM), $\alpha$ and $\sigma$ are controlled simultaneously, allowing for greater control authority via manipulation of the aerodynamic lift and drag coefficients $C_L$ and $C_D$. In this work, we explicitly note the dependence of lift and drag on $\alpha$ to emphasize the additional control channel.  

Aerocapture is often formulated as an \textit{apoapsis-targeting problem}, where given an initial condition, we wish to find the bank angle and angle of attack control profiles that achieve a target apoapsis altitude for the post-atmospheric trajectory. This is given by the final state constraint
\begin{equation} \label{eq: apoapsis contraint}
    r_a(r_{\textrm{exit}}, V_{\textrm{exit}}, \gamma_{\textrm{exit}}) - r_a^* = 0,
\end{equation}
where $r_{\textrm{exit}}, V_{\textrm{exit}}, \gamma_{\textrm{exit}}$ are the radius, inertial velocity, and inertial flight path angle at atmospheric exit, respectively. In the aerocapture problem, $r_{\textrm{exit}}$ is typically given as a predefined fixed radius where atmospheric entry and exit are considered to take place. The target apoapsis radius is $r_a^*$, where $r_a$ is defined by Keplerian orbital mechanics as
\begin{equation} \label{eq: apoapsis targ}
r_a=a\left(1+\sqrt{1-\frac{V_{\textrm{exit}}^2 r_{\textrm{exit}}^2 \cos ^2\left(\gamma_{\textrm{exit}}\right)}{\mu a}}\right).
\end{equation}
The semi-major axis, $a$, is defined by 
\begin{equation} \label{eq: semimajoraxis}
    a = \frac{\mu}{2\mu / r_{\textrm{exit}} - V_{\textrm{exit}}^2}. \nonumber
\end{equation}
We wish to solve the \textit{optimal aerocapture problem}, which we define as the solution to the \textit{apoapsis targeting problem} which minimizes the propellant ($\Delta V$) required to insert the spacecraft into its final orbit. The total propellant consumption is computed as 
\begin{equation} \label{eq:dv}
\Delta V=\sqrt{2 \mu} \left( \left| \sqrt{\frac{1}{r_a}-\frac{1}{r_a+r_p^*}}-\sqrt{\vphantom{\frac{1}{r_a+r_p^*}}\frac{1}{r_a}-\frac{1}{2 a}}\right| + \left| \sqrt{\frac{1}{r_p^*}-\frac{1}{r_a^*+r_p^*}}-\sqrt{\frac{1}{r_p^*}-\frac{1}{r_a+r_p^*}}\right| \right),
\end{equation}
where $r_p^*$ is the target periapsis radius and $\mu$ is the planet gravitational parameter.

\subsection{Exit Velocity Targeting Reformulation}
A primary strength of ABAM is the ability to improve capture success rates in extreme entry conditions.
That is, when the entry flight path angle (EFPA) is excessively steep or shallow such that FNPAG (or any BAM-based algorithm) is unable to find a solution. In these cases, the predicted apoapsis altitude will often be nearly or completely undefined as the vehicle's trajectory is predicted to remain hyperbolic or crash into the planet, providing no meaningful feedback to the corrector to improve the solution in a NPC-based algorithm. Here, we pose an alternative objective function that eliminates the ill-conditioning of the apoapsis targeting criteria. The approach is to achieve a desired orbital energy, which can be approximated as achieving a desired exit velocity. For an elliptical orbit, the desired post-aerocapture orbital energy is given as
\begin{equation}
\epsilon^*=-\frac{\mu}{2 a^*}=\frac{V^2}{2}-\frac{\mu}{r}.
\end{equation}
Evaluating this at atmospheric exit, we can rewrite the desired exit velocity as
\begin{equation}
V_{\text {exit }}^*=\sqrt{2 \mu\left(\frac{1}{r_{\text {exit }}}-\frac{1}{a^*}\right)},
\end{equation}
where the desired semi-major axis, $a^*$, is 
\begin{equation}
a^*=\frac{r_a^*+r_p^*}{2}.
\end{equation}
When $r_a^* \gg r_p^*$ such that $r_a^* + r_p^* \approx r_a^*$, we will achieve the desired apoapsis radius with minimal error. The apoapsis targeting problem now becomes an exit velocity targeting problem where we seek to find the $\alpha$ and $\sigma$ control profiles that achieve the target exit velocity. It is important to note that this substitution is only acceptable for highly elliptical orbits as is the case considered here. The constraint is given as 
\begin{equation} \label{eq: V target}
V_{\text {exit }} - V^*_{\text {exit }} = 0.
\end{equation}
Importantly, $r_\text{exit}$ is less than $r_a^* + r_p^*$, so the quantity inside the square root is always positive thus avoiding imaginary solutions. Further, the objective function is singularity free in the case where the post aerocapture trajectory is hyperbolic. As will be seen in the following section, the choice of apoapsis targeting or velocity targeting does not alter the optimal control solution. However, in the guidance approach defined later in this paper, the guidance system will attempt to solve the velocity targeting problem in real time during every guidance cycle. We will ultimately seek to indirectly minimize $\Delta V$ by flying control trajectories based on the optimal control solution, which is a much more efficient guidance approach than solving the optimal control problem during each guidance call. In Sec. \ref{sec: ocp solutions} we develop the propellant optimal control profiles that will be the basis for the algorithm design. During testing and comparison of the algorithm, FNPAG is adapted to use the velocity targeting objective function for consistency.
\subsection{Longitudinal Problem Statement}
We observe that the objective function $\Delta V$ in Eq. \eqref{eq:dv} and terminal velocity targeting constraint in Eq. \eqref{eq: V target} depend only on the terminal values of the longitudinal motion variables $\mathbf{x_\text{lon}}$ = ($r, V, \gamma$), which are decoupled from the rest of the equations of motion. Therefore, we can isolate the longitudinal channel with the understanding that lateral guidance can be implemented via independent logic. We ignore terms of small magnitude due to planetary rotation and non-spherical gravity, as their effects are minor as long as the lateral motion variables do not appear in a terminal equality constraint \cite{lu2015optimal}. The simplified longitudinal dynamics are 
\begin{equation} \label{eq: longitudinal dynamics}
f(x_\textrm{lon}, u)=\left[\begin{array}{c}\dot{r} \\ \dot{V} \\ \dot{\gamma}\end{array}\right]=\left\{\begin{array}{l}V \sin \gamma \\ -D(\alpha)-\frac{\mu \sin \gamma}{r^2} \\ \frac{1}{V}\left[L(\alpha) u_1+\left(V^2-\frac{\mu}{r}\right) \frac{\cos \gamma}{r}\right] \end{array}\right.
\end{equation}
Note that in Eq. \eqref{eq: longitudinal dynamics}, $\cos \sigma$ has been replaced with $u_1$ to simplify notation. The optimal control problem to be solved in the following section is summarized in Problem \ref{problem: P1}.
\begin{problem} \label{problem: P1}
Find the $\sigma$ and $\alpha$ profiles which satisfy the terminal state constraint while minimizing fuel to correct the post-aerocapture orbit:
\begin{align}
&\underset{u \in \mathcal{U}}{\min} ~ J(x_\text{lon}(t_f)) = \Delta V  \\
 \text{s.t. \quad} &\dot{x}_\text{lon} = f(x_{lon}, u) \\
 &V_\text{exit} - V_\text{exit}^* = 0, \\
 &0^\circ \leq \sigma_\text{min} \leq \|  \sigma \| \leq \sigma_\text{max} \leq 180^\circ, \\
 &-30^\circ \leq \alpha_\text{min} \leq  \alpha \leq \alpha_\text{max} \leq 0^\circ, \\
 &L(\alpha) = f_L(\alpha), \quad D(\alpha) = f_D(\alpha)
\end{align}
\end{problem}
Here, $\sigma_\text{min}$ is larger than 0 degrees (typically about 15$^\circ$), and $\sigma_\text{max}$ is less than 180 degrees (typically about 105$^\circ$-165$^\circ$). These are set to ensure cross-range control authority. Angle of attack is similarly bounded to be within the vehicles allowable range, and is restricted to negative values due to near symmetry about zero and to require that direction of lift still be controlled by bank angle, as in BAM. These ranges define the admissible set of controls, $\mathcal{U}$. Note that in the general problem statement, lift and drag are nonlinear functions of $\alpha$, which will not permit a closed form solution. In Sec. \ref{subsec: aero fits}, we present the linear and quadratic aerodynamic models used in this work. In Problem \ref{problem: P2}, we solve the OCP using a linear aerodynamic model, and in Problem \ref{problem: P3}, we use the quadratic model.

\subsection{Aerodynamic Modeling} \label{subsec: aero fits}
Aerodynamic properties are modeled in simulation via CFD analysis in \cite{shellabarger2025aerodynamics} and stored in lookup tables as nonlinear functions of angle of attack. To obtain an analytic solution for the optimal control profiles, a functional approximation on the form of $C_D(\alpha)$ and $C_L(\alpha)$ is necessary. In this work, we consider linear and quadratic aerodynamic models.
The linear model assumes that the lift and drag coefficients vary linearly as a function of angle of attack as 
\begin{equation} \label{eq: linear aero model}
C_D(\alpha) = C_{D_\alpha} \alpha + C_{D,0}, \quad
C_L(\alpha) = C_{L_\alpha} \alpha + C_{L,0}.
\end{equation}
The quadratic model is given by  
\begin{equation}  \label{eq: quadratic aero model}
C_D(\alpha) = C_{D, \alpha^2} \alpha^2 + C_{D, \alpha} \alpha + C_{D,0}, \quad
C_L(\alpha) = C_{L, \alpha^2} \alpha^2 + C_{L, \alpha} \alpha + C_{L,0}.
\end{equation}
Figure \ref{Aero Fits} shows the linear (blue) and quadratic (black) drag models (left panel) resulting from fitting the nonlinear aerodynamic database used in simulation to a linear and quadratic function of $\alpha$, as well as the error percentage compared to truth (right panel). The nonlinear aerodynamic model is given in red for comparison. The coefficients are given in Table \ref{tab:aeroCoeffs}. In the right panel, the black dashed lines represent $\pm$ 3\% error for reference.
We observe that when $\alpha$ $\in$ [-10, -25], the linearly approximated $D$ error percentage is within 3\% for the entire range, suggesting that the linear model is a reasonable approximation of the true aerodynamics within this range. Previous work \cite{deshmukh2020investigation} has shown that this level of accuracy is well within the valid range for blunt bodied aerocapture vehicles. For the quadratic model over the same range, the error percentage drops to within $\approx$ 0.3\%, indicating that the quadratic model is extremely close to the true aerodynamics. 
The lift model follows a similar trend, and thus is omitted from the analysis.

\begin{figure}[h!]
	\centering
	\includegraphics[width=0.8\textwidth]{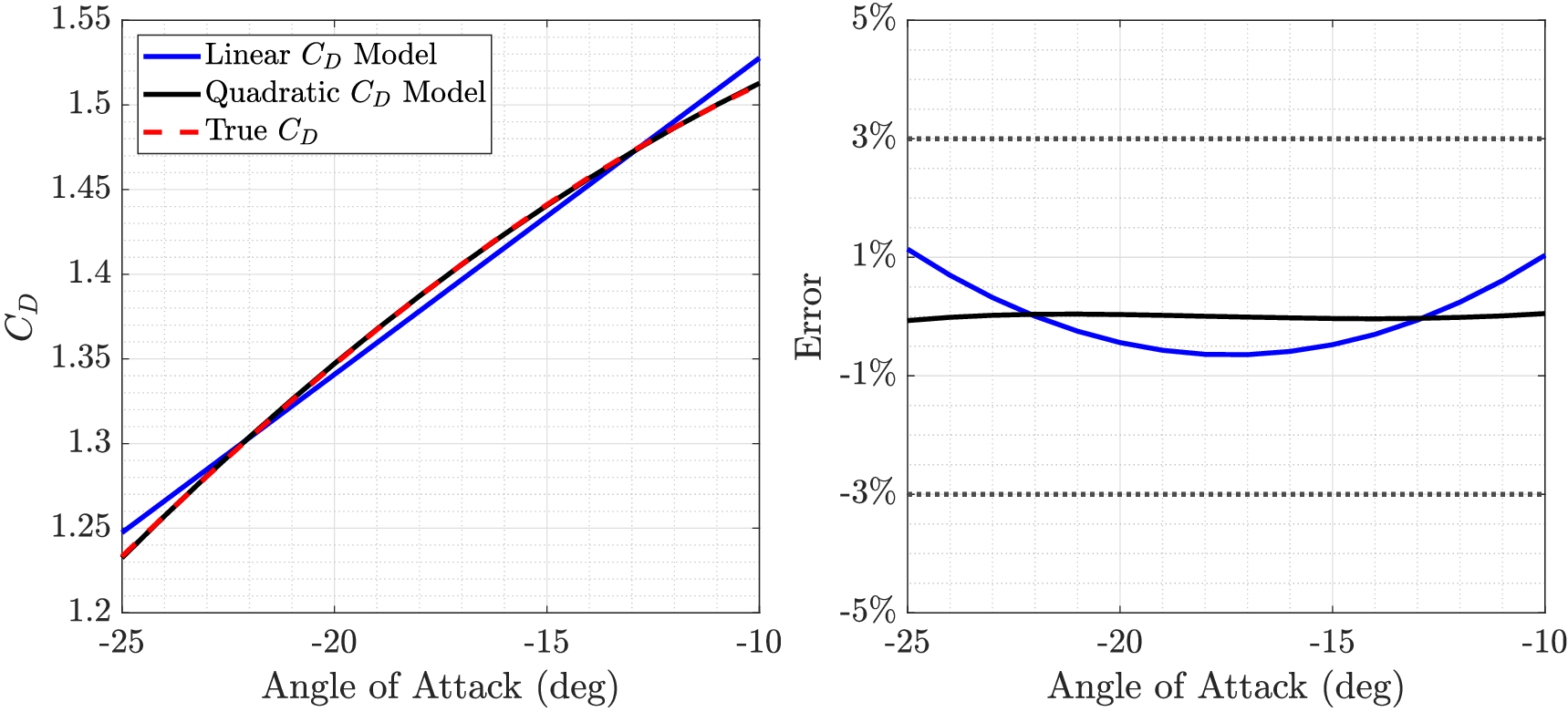}
	\caption{Linear (blue) and quadratic (black) $C_D$ approximations versus true aerodynamics (red) as a function of angle of attack (left panel). The right panel displays error percentage versus truth over the $\alpha$ range. The linear model well approximates the true nonlinear aerodynamics, and the quadratic model is nearly identical to truth.} \label{Aero Fits}
\end{figure}

\begin{table}[htbp] 
\centering
\caption{Aerodynamic coefficients for linear and quadratic models.}
\label{tab:aeroCoeffs}
\begin{tabular}{lcc}
\toprule
\toprule
\textbf{Coefficient} & \textbf{Linear Model} & \textbf{Quadratic Model} \\
\midrule
\(C_{D, 0}\) & 1.72 & 1.59 \\
\(C_{L, 0}\) & $7.07 \cdot 10^{-2}$ & $-2.71 \cdot 10^{-2}$ \\
\(C_{D, \alpha}\) & $1.87 \cdot 10^{-2}$ & $3.83 \cdot 10^{-3}$ \\
\(C_{L, \alpha}\) &$ -1.62 \cdot 10^{-2}$ &$ -2.82 \cdot 10^{-2}$ \\
\(C_{D, \alpha^2}\) & -- & $-4.25 \cdot 10^{-4} $\\
\(C_{L, \alpha^2}\) & -- & $-3.43 \cdot 10^{-4} $\\
\bottomrule
\bottomrule
\end{tabular}
\end{table}

\section{Optimal Control Solutions} \label{sec: ocp solutions}
This section presents optimal control profiles for an aerocapture vehicle using augmented bank angle modulation. 
The optimal control structure is derived using the linear aerodynamic model in Sec. \ref{sec: Linear OCP}, and the quadratic aerodynamic model in Sec.  \ref{sec: Quadratic OCP}. The OCP is solved numerically using GPOPS in Sec. \ref{sec: GPOPS}. 
Similar derivations can be found in Ref. \cite{lu2015optimal} for BAM and Ref. \cite{deshmukh2020investigation} for DFC. 
The objective of this exercise is to generate insight into the optimal control structure, which can then guide an efficient real-time implementation.
\subsection{Linear Aerodynamic Model} \label{sec: Linear OCP}
The following derivation is adapted from our previous work \cite{sonandres2025aerocapture}. Note that the solution is general to any targeting constraint that is dependent only on the terminal values of the longitudinal motion variables. 
\begin{problem} \label{problem: P2}
Find the $\sigma$ and $\alpha$ profiles that solve Problem \ref{problem: P1} subject to the linear aerodynamic models in Eq. \eqref{eq: linear aero model}.
\end{problem}
\begin{definition} \label{def: sigma bam}
Let  $\sigma^*_{\textrm{BAM}}: 
\mathbb{R} \rightarrow \{ \sigma_\textrm{min}, ~ \sigma_\textrm{max} \}$ be the optimal bank angle profile for the BAM aerocapture guidance problem \cite{lu2015optimal}, where $\sigma_\textrm{min}$ and $\sigma_\textrm{max}$ represent lift-up and lift-down flight, respectively. Define the bank angle switching curve as $H_\sigma = \lambda_\gamma$, where $\lambda_\gamma$ is the costate variable associated with the state variable $\gamma$. Then 
\begin{equation} \label{eq: sigma bam}
    \sigma_\textrm{BAM}^*= \begin{cases}\sigma_{\text {min }}, & \text { if } H_{\sigma}>0 \\ \sigma_{\text {max }}, & \text { if } H_{\sigma}<0\end{cases}
\end{equation}
The singular case where $\sigma_\textrm{BAM}^* \in \left[ \sigma_\textrm{min}, ~ \sigma_\textrm{max} \right]$ requires $H_\sigma = 0 $ in $ \left[t_1, ~ t_2 \right] \subset \left[t_0, ~t_f \right]$, which is shown to be impossible in Ref. \cite{lu2015optimal}. The switching time $t_\sigma$ refers to the time at which the switching curve crosses zero: $H_\sigma(t_\sigma) = 0$. 
\end{definition}
\begin{definition} \label{def: alpha switching functions}
For the two-phase flight given in Definition \ref{def: sigma bam}, define the switching curve associated with $\alpha$ during lift-up flight ($\sigma = \sigma_\textrm{min}$), $H_{\alpha, \textrm{up}}: \left[t_0,~ t_f \right] \rightarrow \mathbb{R}$, as
\begin{equation} \label{eq: H alpha up}
H_{\alpha, \textrm{up}}  = - \lambda_V C_{D, \alpha} + \lambda_{\gamma} \frac{C_{L,\alpha}}{V},
\end{equation}
where $\lambda_V$ is the costate variable associated with $V$. During lift-down flight ($\sigma = \sigma_\textrm{max}$), define $H_{\alpha, \textrm{down}}: \left[t_0,~ t_f \right] \rightarrow \mathbb{R}$, as
\begin{equation} \label{eq: H alpha down}
H_{\alpha, \textrm{down}}  = - \lambda_V C_{D, \alpha} - \lambda_{\gamma} \frac{C_{L,\alpha}}{V}.
\end{equation}
The switching time $t_{\alpha, \textrm{up}}$ refers to the time at which $H_{\alpha, \textrm{up}}(t) = 0$, and $t_{\alpha, \textrm{down}}$ refers to the time at which $H_{\alpha, \textrm{down}}(t) = 0$.
\end{definition}
\begin{theorem} \label{theorem: theorem 1}
Assume that the optimal bank angle for the two-input case (ABAM) is given by $\sigma^* = \sigma^*_\text{BAM}$, as in Definition \ref{def: sigma bam}. Then the optimal angle of attack profile, $\alpha^*: 
\mathbb{R} \rightarrow \{ \alpha_\textrm{min}, ~ \alpha_\textrm{max} \}$, is determined by 
\begin{equation} \label{eq: optimal aoa}
    \alpha^*= \begin{cases}\alpha_{\text {min }}, & \text { if } H_{\alpha}>0 \\ \alpha_{\text {max }}, & \text { if } H_{\alpha}<0\end{cases}, 
\end{equation}
where $H_\alpha = H_{\alpha, \textrm{up}}$ during lift-up flight, and $H_\alpha = H_{\alpha, \textrm{down}}$ during lift-down flight, given in Definition \ref{def: alpha switching functions}. 
\end{theorem}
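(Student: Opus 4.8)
The plan is to treat Problem~\ref{problem: P2} as a Mayer-form optimal control problem and apply Pontryagin's Minimum Principle, exploiting the fact that the linear aerodynamic model \eqref{eq: linear aero model} renders the dynamics \eqref{eq: longitudinal dynamics} affine in both controls. First I would introduce the control Hamiltonian $\mathcal{H}=\lambda_r\dot r+\lambda_V\dot V+\lambda_\gamma\dot\gamma$ (distinct from the scalar switching curves $H_\sigma,H_\alpha$), write the adjoint equations $\dot\lambda=-\partial\mathcal{H}/\partial x_\text{lon}$, and record the transversality conditions at $t_f$ (where $r(t_f)=r_\text{exit}$): since $J=\Delta V$ and the terminal constraint $V_\text{exit}-V_\text{exit}^*=0$ depend only on $x_\text{lon}(t_f)$, the multiplier $(\lambda,\nu)$ is nontrivial and $\lambda(t_f)$ is fixed by $\partial_{x_\text{lon}}J$ plus the adjoined terminal conditions. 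Because this construction never uses the explicit form of the targeting criterion, the derivation is identical for the apoapsis- and velocity-targeting problems, which is the content of the remark preceding Sec.~\ref{sec: ocp solutions}.

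Next comes the structural core. Substituting \eqref{eq: LD} and \eqref{eq: linear aero model}, the only control-dependent part of $\mathcal{H}$ is $-\lambda_V D(\alpha)+\lambda_\gamma L(\alpha)\cos\sigma/V$; since $L$ and $D$ are affine in $\alpha$ and $\cos\sigma$ enters linearly, this is a bilinear function of $(\alpha,\cos\sigma)$ on the rectangle $[\alpha_\text{min},\alpha_\text{max}]\times[\cos\sigma_\text{max},\cos\sigma_\text{min}]$, hence extremized at a vertex, so the PMP-optimal control is bang-bang in each channel. Granting the hypothesis $\sigma^*=\sigma^*_\text{BAM}$, $\cos\sigma$ is frozen at $\cos\sigma_\text{min}$ on lift-up arcs and at $\cos\sigma_\text{max}$ on lift-down arcs, and the residual extremization over $\alpha$ is of a function affine in $\alpha$ with slope $\partial\mathcal{H}/\partial\alpha=\tfrac{\rho V S}{2m}\bigl(-\lambda_V V C_{D,\alpha}+\lambda_\gamma C_{L,\alpha}\cos\sigma\bigr)$. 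Dividing out the positive factor $\rho V^2S/(2m)$, the sign of this slope equals that of $-\lambda_V C_{D,\alpha}+\lambda_\gamma C_{L,\alpha}\cos\sigma/V$; under the stated bank limits a lift-up arc has $\cos\sigma=\cos\sigma_\text{min}>0$, making this a positive multiple of $H_{\alpha,\text{up}}$ in \eqref{eq: H alpha up}, while a lift-down arc has $\cos\sigma=\cos\sigma_\text{max}<0$, which flips the sign of the $\lambda_\gamma$ term and yields $H_{\alpha,\text{down}}$ in \eqref{eq: H alpha down}. The bang-bang rule for an affine-in-$\alpha$ Hamiltonian then gives exactly \eqref{eq: optimal aoa}, with the orientation ($\alpha_\text{min}$ when $H_\alpha>0$) fixed by the same sign convention used in Definition~\ref{def: sigma bam} and Ref.~\cite{lu2015optimal}. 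To close the loop with the hypothesis, I would note that the coefficient of $\cos\sigma$ in $\mathcal{H}$ is $\lambda_\gamma L(\alpha)/V$, whose sign is $\mathrm{sign}(\lambda_\gamma)=\mathrm{sign}(H_\sigma)$ for every admissible $\alpha$ because $L(\alpha)>0$ over the admissible range; thus the selected bank vertex is independent of the angle-of-attack vertex and agrees with $\sigma^*_\text{BAM}$, so the assumption is internally consistent.

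The main obstacle is excluding a singular angle-of-attack arc, i.e. $H_\alpha\equiv 0$ on some subinterval $[t_1,t_2]\subset[t_0,t_f]$, which is exactly what the codomain $\{\alpha_\text{min},\alpha_\text{max}\}$ in the theorem asserts cannot occur. The plan, paralleling the elimination of the singular $\sigma$-case in Ref.~\cite{lu2015optimal}, is to differentiate $H_\alpha$ along the candidate optimal trajectory using the adjoint equations for $\dot\lambda_V$ and $\dot\lambda_\gamma$ --- which introduce the altitude gradient $\rho'(r)$ through $\partial D/\partial r$ and $\partial L/\partial r$, together with the $V$- and $\gamma$-partials of $\dot\gamma$ --- and to impose $H_\alpha=\dot H_\alpha=0$ on $[t_1,t_2]$. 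This should over-constrain $(\lambda_V,\lambda_\gamma)$, and combined with $\lambda_\gamma=H_\sigma\neq 0$ on the interior of each bank phase and the nontriviality of $(\lambda,\nu)$, it should produce a contradiction; I expect the delicate part to be the bookkeeping of the density-gradient terms and verifying that the coefficient multiplying the surviving costate cannot itself vanish on an interval. Once the singular arc is ruled out, \eqref{eq: optimal aoa} holds almost everywhere, and since the switching times $t_{\alpha,\text{up}},t_{\alpha,\text{down}},t_\sigma$ are isolated zeros of smooth functions, the control is well defined up to a null set, completing the proof.
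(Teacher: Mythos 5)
Your proposal is correct and follows essentially the same route as the paper: form the Hamiltonian, freeze $u_1=\cos\sigma^*_{\textrm{BAM}}$ on each arc, observe the resulting affine dependence on $\alpha$, and read the bang-bang law off the sign of the switching functions of Definition \ref{def: alpha switching functions}. The only divergence is your (sketched, not completed) elimination of a singular $\alpha$-arc, which the paper does not attempt either, relegating it to a remark that cites Refs.~\cite{lu2015optimal} and \cite{deshmukh2020investigation} and explicitly states it was not investigated here.
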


\begin{proof}
We begin by forming the Hamiltonian for the two-input problem. There are no running costs, so it is formed by adjoining the costate variables ($\lambda_r$, $\lambda_V$, $\lambda_\gamma$) to the dynamics
\begin{equation} \label{eq: H}
    H(x, u, \lambda) = \lambda_r \dot{r} + \lambda_V \dot{V} + \lambda_\gamma \dot{\gamma}, 
\end{equation}
where the control vector is $u = \left[ u_1, ~ \alpha \right]$. By Pontryagin's Minimum Principle, the optimal control profile is given by 
\begin{equation}
u^* 
=
\begin{bmatrix}
u_1^* \\
\alpha^*
\end{bmatrix} 
=
\underset{u \in \mathcal{U}}{\arg \min} \ H \left(x, u, \lambda \right),
\end{equation}
We assume that $\sigma^* = \sigma^*_{\textrm{BAM}}$, as given in Definition \ref{def: sigma bam}, so we set $u_1^* = \cos \left( \sigma^*_{\textrm{BAM}} \right)$. For lift-up flight, $\sigma^*_{\textrm{BAM}} = \sigma_\textrm{min}$, so $u_1^* \approx 1$, and for lift-down flight, $\sigma^*_{\textrm{BAM}} = \sigma_\textrm{max}$, so $u_1^* \approx 1$. The solution will therefore produce $\alpha^*$.
Substituting the dynamics from Eq. \eqref{eq: longitudinal dynamics} into the Hamiltonian yields
\begin{equation} \label{eq: H with dynamics}
    H = \lambda_r \left(V \sin \gamma\right) + \lambda_V \left( -D(\alpha)-\frac{\mu \sin \gamma}{r^2}\right) + \lambda_\gamma \left(\frac{1}{V}\left[L(\alpha) u_1 +\left(V^2-\frac{\mu}{r}\right) \frac{\cos \gamma}{r}\right]\right),
\end{equation}
and when $u_1^*$ is inserted, there only remains a linear control dependence on $\alpha$ in $H$. We can thus rearrange the Hamiltonian as 
\begin{equation}
    H = H_0 + H_{\alpha, \textrm{(up/down)}} \alpha,
\end{equation}
where $H_{\alpha,  \textrm{up/down}}$ are the switching curves per Definition \ref{def: alpha switching functions}, and $H_0$ consists of terms that do not depend on the control variable. We can omit $H_0$, as it has no implications for the solution, and substitute in Eq. \eqref{eq: LD} for $L(\alpha)$ and $D(\alpha)$ to get an augmented Hamiltonian $\tilde{H}$
\begin{align} \label{eq: H nonlinear}
    \tilde{H} &= \left[-\lambda_V \left( q S C_{D, \alpha}\right)  \pm \lambda_\gamma \left( \frac{q}{V} S C_{L,\alpha}\right) \right] \alpha, \\
    &= q S \left( -\lambda_V C_{D, \alpha} \pm \lambda_\gamma \frac{C_{L, \alpha}}{V}\right) \alpha, \\
    &= H_{\alpha, \textrm{(up/ down)}} \alpha.
\end{align}
where $q = \frac{1}{2} \rho V^2$ is the dynamic pressure. Note that minimizing $\tilde{H}$ is equivalent to minimizing $H$. We can drop $q$ and $S$ since they do not affect the zero crossing, to get the final form of the switching curves
\begin{align}
    H_{\alpha, \textrm{up}}  &= - \lambda_V C_{D, \alpha} + \lambda_{\gamma} \frac{C_{L,\alpha}}{V}, \\
    H_{\alpha, \textrm{down}}  &= - \lambda_V C_{D, \alpha} - \lambda_{\gamma} \frac{C_{L,\alpha}}{V}. 
\end{align}
Since $\tilde{H}$ is affine in $\alpha$, the $\alpha$ that minimizes $\tilde{H}$ lies on the boundary of $\mathcal{U}$, yielding
\begin{equation} 
    \alpha^*= \begin{cases}\alpha_{\text {min }}, & \text { if } H_{\alpha}>0, \\ \alpha_{\text {max }}, & \text { if } H_{\alpha}<0,\end{cases}
\end{equation}
where $H_\alpha = H_{\alpha, \textrm{up}}$ during lift-up flight, and $ H_\alpha = H_{\alpha, \textrm{down}}$ during lift-down flight.
\end{proof}
%%%%%%%%%%%%%%%%%%%%%%%%%%%%%%%%%%%%%%%%%%%%%%%%%%%%%%%%%%%%%%%%%%%%%%%
\vspace{-2mm}
\begin{remark}
The control in the case where $ H_{\alpha,\textrm{up}}$ and $ H_{\alpha,\textrm{down}}$ equal zero for a non-zero time interval represents a singular arc, and it has been shown to be impossible for similar problems \cite{lu2015optimal}, \cite{deshmukh2020investigation}, but was not investigated in this work. 
\end{remark}
\begin{remark} \label{remark: bank switch}
Given the assumption that the optimal $\sigma$ profile follows the results in \cite{lu2015optimal}, we expect to see
\begin{equation} \label{eq: sig}
    \sigma^*= \begin{cases}\sigma_{\text {min }}, & \text { if } H_\sigma >0 \\ \sigma_{\text {max }}, & \text { if } H_\sigma<0 \end{cases}
\end{equation}
Notice that this solution is recovered at the intersection of the lift-up and lift-down switching functions, where $H_{\alpha,\textrm{up}} = H_{\alpha,\textrm{down}}$. We can therefore define the bank angle switching ($H_\sigma$) function accordingly, dropping the constant multiplication factor
\begin{equation}
    H_\sigma = H_{\alpha,\textrm{up}} - H_{\alpha,\textrm{down}} = \lambda_\gamma
\end{equation}
\end{remark}
\begin{remark} \label{remark 3}
We assume that the control profile consists of at most three switches ($\sigma$ may switch once and $\alpha$ may switch once both before and after the $\sigma$ switch). The following numerical analysis supports this assumption. Considering a nominal scenario where the vehicle begins flying at $\sigma_{\textrm{min}}$ and $\alpha_{\textrm{min}}$, the first switching time occurs when $H_{\alpha, \textrm{up}}$ crosses zero and corresponds to an $\alpha$ switch. The second occurs when $H_{\alpha, \textrm{up}} = H_{\alpha, \textrm{down}}$ and corresponds to a $\sigma$ switch. The third occurs when $H_{\alpha, \textrm{down}}$ crosses zero and corresponds to the second $\alpha$ switch. 

While we assume the vehicle begins flying $\sigma_{\textrm{min}}$ and $\alpha_\textrm{min}$ for consistency with FNPAG, it remains possible for other permutations of control sequences to exist. 
For example, a full-lift down flight is possible, and may be thought of as a special case of the bang-bang control scheme, where the duration of the $\sigma = \sigma_\textrm{min}$ phase is zero. In this case, $H_{\alpha, \textrm{down}}$ governs a potential $\alpha$ switch from $\alpha_\textrm{min}$ to $\alpha_\textrm{max}$. However, it is also possible for the duration of the $\alpha = \alpha_\textrm{max}$ phase to be zero. This would result in a trajectory where no control variable switches occur. 
For these reasons, the three-switch control profile is not an inherently limiting architecture.  
\end{remark}

\subsection{Quadratic Aerodynamic Model} \label{sec: Quadratic OCP}
\begin{problem} \label{problem: P3}
Find the $\sigma$ and $\alpha$ profiles that solve Problem \ref{problem: P1} subject to the quadratic aerodynamic models in Eq. \eqref{eq: quadratic aero model}.
\end{problem}
\begin{definition} \label{def: alpha curves}
For the two-phase flight given by Definition \ref{def: sigma bam}, let  $H_{\alpha, \textrm{up}}$ and  $H_{\alpha, \textrm{down}}$ be functions associated with $\alpha$ during lift-up and lift-down flight, as in Definition \ref{def: alpha switching functions}. To clarify notation, we no longer refer to these as switching curves since the Hamiltonian will no longer be affine in $\alpha$, resulting in a solution that is not bang-bang in $\alpha$. Define $H_{\alpha^2, \textrm{up}}: \left[t_0,~ t_f \right] \rightarrow \mathbb{R}$, the function associated with $\alpha^2$ during lift-up flight, as
\begin{equation} \label{eq: alpha2 up}
H_{\alpha^2, \textrm{up}} = \left( - \lambda_V C_{D, \alpha^2} + \lambda_{\gamma} \frac{C_{L,\alpha^2}}{V}\right), 
\end{equation}
and let $H_{\alpha^2, \textrm{down}}: \left[t_0,~ t_f \right] \rightarrow \mathbb{R}$, the function associated with $\alpha^2$ during lift-down flight, be
\begin{equation} \label{eq: alpha2 down}
H_{\alpha^2, \textrm{down}} = \left( - \lambda_V C_{D, \alpha^2} - \lambda_{\gamma} \frac{C_{L,\alpha^2}}{V}\right), 
\end{equation}
\end{definition}

\begin{definition} \label{def: optimal alpha curves}
Given $H_{\alpha,\textrm{up}}$, $H_{\alpha^2,\textrm{up}}$, $H_{\alpha,\textrm{down}}$, and $H_{\alpha^2,\textrm{down}}$ in Definition \ref{def: alpha curves}, define the optimal $\alpha$ control profile associated with $\alpha$ during lift-up flight, $\mathcal{A}_\textrm{up}: \left[t_0,~ t_f \right] \rightarrow \mathbb{R}$, as
\begin{equation}
    \mathcal{A}_\textrm{up} = \frac{-H_{\alpha,\textrm{up}}}{2  H_{\alpha^2,\textrm{up}}}.
\end{equation}
During lift-down flight, define  $\mathcal{A}_\textrm{down}: \left[t_0,~ t_f \right] \rightarrow \mathbb{R}$, as
\begin{equation}
    \mathcal{A}_\textrm{down} = \frac{-H_{\alpha,\textrm{down}}}{2  H_{\alpha^2,\textrm{down}}}.
\end{equation}
Both $H_{\alpha^2, \textrm{up}}$ and $H_{\alpha^2, \textrm{down}}$ are assumed to be nonzero to avoid singularities in the definitions of $\mathcal{A}_\textrm{up}$ and $\mathcal{A}_\textrm{down}$.
\end{definition}

%%%%%%%%%%%%%%%%%%%%%%%%%%%%%%%%%%%%%%%%%%%%%%%%%%%%%%%%%%%%%%%%%%%%%%%%%%%%%%%%%%
\begin{theorem} \label{theorem: theorem 2}
Assume that the optimal bank angle for the two-input case (ABAM) is given by $\sigma^* = \sigma^*_\text{BAM}$, as in Definition \ref{def: sigma bam}. Then the optimal angle of attack profile, $\alpha^*: 
\left[t_0,~ t_f \right] \rightarrow \left[ \alpha_\textrm{min}, ~ \alpha_\textrm{max} \right]$, is determined by 
\begin{equation} \label{eq: quadratic alpha star}
    \alpha^*= \begin{cases}\alpha_{\text {min }} & \text { if } \mathcal{A}<\alpha_{\text {min }} \\ \mathcal{A} & \text { if } \quad \alpha_{\text {max }} \leq \mathcal{A} \leq \alpha_{\text {min }} \\ \alpha_{\text {max }} & \mathcal{A}>\alpha_{\text {max }}\end{cases}
\end{equation}
where $\mathcal{A} = \mathcal{A}_\textrm{up}$ during lift-up flight, and $\mathcal{A} = \mathcal{A}_\textrm{down}$ during lift-down flight, per Definition \ref{def: optimal alpha curves}. 
\end{theorem}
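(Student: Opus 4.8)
The plan is to reproduce the Pontryagin-based argument of Theorem~\ref{theorem: theorem 1} with the single structural change that the quadratic aerodynamic model of Eq.~\eqref{eq: quadratic aero model} renders the Hamiltonian quadratic — rather than affine — in $\alpha$. First I would form $H = \lambda_r \dot r + \lambda_V \dot V + \lambda_\gamma \dot\gamma$ exactly as in Eq.~\eqref{eq: H with dynamics}, substitute the longitudinal dynamics of Eq.~\eqref{eq: longitudinal dynamics}, fix $u_1^* = \cos\sigma^*_{\textrm{BAM}}$ under the standing assumption $\sigma^* = \sigma^*_{\textrm{BAM}}$ (approximately $+1$ in lift-up and $-1$ in lift-down flight, the same approximation used in the proof of Theorem~\ref{theorem: theorem 1}), and then substitute $L(\alpha)$ and $D(\alpha)$ from Eq.~\eqref{eq: LD} together with the quadratic coefficients $C_{D,\alpha^2}, C_{D,\alpha}, C_{L,\alpha^2}, C_{L,\alpha}$. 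Because the quadratic model now carries an $\alpha^2$ term, every retained term of $H$ becomes a polynomial of degree at most two in $\alpha$.

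Next I would collect powers of $\alpha$ and discard the control-independent part $H_0$, and — after dropping the common positive factor $qS = \tfrac{1}{2}\rho V^2 S$, which does not move the minimizer over $\alpha$ — write $\tilde H = H_{\alpha^2,(\textrm{up/down})}\,\alpha^2 + H_{\alpha,(\textrm{up/down})}\,\alpha$, verifying that the linear coefficients coincide with $H_{\alpha,\textrm{up}}, H_{\alpha,\textrm{down}}$ of Definition~\ref{def: alpha switching functions} and the quadratic coefficients with $H_{\alpha^2,\textrm{up}}, H_{\alpha^2,\textrm{down}}$ of Definition~\ref{def: alpha curves}, the sign on the lift contributions flipping with the sign of $u_1^*$ between the two phases. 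Pontryagin's Minimum Principle then makes $\alpha^*$ the minimizer of $\tilde H$ over $[\alpha_{\min},\alpha_{\max}]$. Since $\tilde H$ is a scalar quadratic in $\alpha$ with (assumed nonzero) leading coefficient $H_{\alpha^2}$, its unconstrained stationary point is the vertex $\mathcal{A} = -H_\alpha/(2H_{\alpha^2})$, which is $\mathcal{A}_\textrm{up}$ or $\mathcal{A}_\textrm{down}$ per Definition~\ref{def: optimal alpha curves}; a convex quadratic restricted to a compact interval attains its minimum at the vertex when the vertex lies inside the interval and otherwise at the nearer endpoint. That projection (saturation) of $\mathcal{A}$ onto $[\alpha_{\min},\alpha_{\max}]$ is precisely the three-case expression of Eq.~\eqref{eq: quadratic alpha star}, which finishes the derivation.

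The main obstacle is the second-order condition: the saturation formula is the minimizer only when $H_{\alpha^2} > 0$, i.e., when $\tilde H$ is convex in $\alpha$; if $H_{\alpha^2} < 0$ the vertex $\mathcal{A}$ is a local maximizer and the optimum jumps to whichever endpoint lies farther from $\mathcal{A}$, which would not match the stated result. I would therefore need to argue that $H_{\alpha^2,\textrm{up}}$ and $H_{\alpha^2,\textrm{down}}$ are positive along optimal trajectories, using sign information on the costates $\lambda_V$ and $\lambda_\gamma$ implied by the transversality conditions of the velocity-targeting problem, together with the negativity of $C_{D,\alpha^2}$ and $C_{L,\alpha^2}$ reported in Table~\ref{tab:aeroCoeffs} and the fact that the $-\lambda_V C_{D,\alpha^2}$ drag contribution dominates over most of the pass (the lift term carrying the extra $1/V$ factor and $\lambda_\gamma$ vanishing at the bank switch). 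The degenerate case $H_{\alpha^2}=0$ — a Hamiltonian affine in $\alpha$, which simply recovers the bang-bang behavior of Theorem~\ref{theorem: theorem 1} — is excluded by the nonsingularity hypothesis of Definition~\ref{def: optimal alpha curves}, and, as in Theorem~\ref{theorem: theorem 1}, the approximation $u_1^* \approx \pm 1$ is what permits the phase-dependent coefficients to be stated without an explicit $\cos\sigma$ factor; both points I would flag in remarks rather than belabor. The numerical GPOPS solutions of Sec.~\ref{sec: GPOPS} then provide the empirical check that $\mathcal{A}$ remains in the convex regime and that no switching behavior beyond Remark~\ref{remark 3} appears.
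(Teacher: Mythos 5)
Your proposal follows essentially the same route as the paper's proof: form the Hamiltonian, fix $u_1^* = \cos\sigma^*_{\textrm{BAM}}$, collect the $\alpha$ and $\alpha^2$ terms into $H_{\alpha,(\textrm{up/down})}$ and $H_{\alpha^2,(\textrm{up/down})}$ after dropping $H_0$ and the common factor $qS$, solve the stationarity condition $\partial\tilde{H}/\partial\alpha = 0$ to obtain $\mathcal{A} = -H_{\alpha}/(2H_{\alpha^2})$, and then impose the bounds by saturation. The second-order (convexity) issue you flag --- that the saturated vertex is the minimizer only if $H_{\alpha^2,(\textrm{up/down})} > 0$ --- is a genuine subtlety, but the paper's own proof does not address it either (it applies only the first-order condition plus the nonsingularity assumption of Definition~\ref{def: optimal alpha curves}), so your treatment is at least as complete as the published argument rather than a departure from it.
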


%%%%%%%%%%%%%%%%%%%%%%%%%%%%%%%%%%%%%%%%%%%%%%%%%%%%%%%%%%%%%%%%%%%%%%%%%%%%%%%
\begin{proof}
The core machinery of this proof is similar to the linear case. We begin by forming the Hamiltonian, as done in Eq. \eqref{eq: H}. Since $H$ is not affine in $\alpha$, we no longer rely on the minimum principle to derive $u^*$. Instead, the optimal control profile $u^*$ in the unconstrained space is now given by the necessary condition for optimality,
\begin{equation}
\frac{\partial H}{\partial u} = 0.
\end{equation}
Like before, the dynamics are substituted into the Hamiltonian and $u_1^* = \cos \left( \sigma^*_\textrm{BAM} \right)$ is inserted. Upon substitution, the Hamiltonian becomes a function of $\alpha$ and $\alpha^2$, so it can be rearranged as
\begin{equation}
    H = H_0 + H_{\alpha, \textrm{(up/down)}} \alpha + H_{\alpha^2, \textrm{(up/down)}} \alpha^2,
\end{equation}
where $H_0$ consists of terms that do not depend on the control variable, and $H_{\alpha, \textrm{(up/down)}}$ and $H_{\alpha^2, \textrm{(up/down)}}$ are the functions associated with $\alpha$ and $\alpha^2$, as given by Definition \ref{def: alpha curves}. 
We can omit $H_0$ since there is no control dependence and substitute Eq. \eqref{eq: LD} for $L(\alpha)$ and $D(\alpha)$ to get 
\begin{align}
    \Tilde{H} &= qS \left(  - \lambda_V C_{D, \alpha} \pm \lambda_{\gamma} \frac{C_{L,\alpha}}{V}\right) \alpha +  qS \left(  - \lambda_V C_{D, \alpha^2} \pm \lambda_{\gamma} \frac{C_{L,\alpha^2}}{V}\right) \alpha^2, \\
    &= \left(  - \lambda_V C_{D, \alpha} \pm \lambda_{\gamma} \frac{C_{L,\alpha}}{V}\right) \alpha +  \left(  - \lambda_V C_{D, \alpha^2} \pm \lambda_{\gamma} \frac{C_{L,\alpha^2}}{V}\right) \alpha^2, \\
    &= H_{\alpha, \textrm{(up/down)}} \alpha + H_{\alpha^2, \textrm{(up/down)}} \alpha^2,
\end{align}
Note that $q$ and $S$ can be dropped since they will cancel in the final solution. Since the control variable is no longer linear in the Hamiltonian, we can find $\alpha^*$ directly by leveraging the necessary condition to minimize the augmented Hamiltonian
\begin{equation}
    \frac{\partial\Tilde{H}}{\partial \alpha} =  H_{\alpha, \textrm{(up/down)}} + 2 H_{\alpha^2, \textrm{(up/down)}} \alpha^* = 0.
\end{equation}
Solving for $\alpha^*$ gives
\begin{equation}
\alpha^* = \frac{-H_{\alpha, \textrm{(up/down)}}}{2 H_{\alpha^2, \textrm{(up/down)}}} = \mathcal{A}_\textrm{(up/down)}
\end{equation}
Finally, we can impose the control magnitude constraints to arrive at the final optimal control solution
\begin{equation}
    \alpha^*= \begin{cases}\alpha_{\text {min }} & \text { if } \mathcal{A}_{\text{(up/down)}}<\alpha_{\text {min }} \\ \mathcal{A}_{\text{(up/down)}} & \text { if } \quad \alpha_{\text {max }} \leq \mathcal{A}_{\text{(up/down)}} \leq \alpha_{\text {min }} \\ \alpha_{\text {max }} & \mathcal{A}_{\text{(up/down)}}>\alpha_{\text {max }}\end{cases}
\end{equation}

\end{proof}
\begin{remark}
As in the linear case, we expect to recover information about the bank angle switch in some way due to the core assumption that $\sigma^* = \sigma_\text{BAM}^*$. We observe that the intersection of $\mathcal{A}_\text{up}$ and $\mathcal{A}_\text{down}$ corresponds to the intersection of $H_{\alpha, \textrm{up}}$ and $H_{\alpha, \textrm{down}}$, which was established in the linear case to be equivalent to the bank angle switching condition.  
\end{remark}
\begin{remark}
The angle of attack profile derived using quadratic aerodynamics is not bang-bang in nature, and parameterizing Eq. \eqref{eq: quadratic alpha star} for use in a guidance algorithm is a non-trivial task. Because of the added complexity associated with directly using this solution, in this work we only intend to take advantage of the finding that an unsaturated $\alpha$ flight regime exists, and that performance can be improved by incorporating logic to find an unsaturated $\alpha$-$\sigma$ control solution. This is done in continuous alpha-sigma modulation (CASM), the new subprocess presented in Sec. \ref{sec: casm}.
\end{remark}

\subsection{Numerical Verification and Comparison} \label{sec: GPOPS}
In order to verify the theory presented above, we solve Problem \ref{problem: P2} and Problem \ref{problem: P3} numerically using the Gauss Pseudospectral Optimization Software (GPOPS) \cite{benson2005gauss, rao2010algorithm}. The theorems presented above were not included when encoding the problem, so any similarities between the numerical results and control theory have arisen organically. The longitudinal dynamics used are as in Eq. \eqref{eq: longitudinal dynamics}. We enforce a final state constraint according to the apoapsis targeting criteria in Eq. \eqref{eq: apoapsis contraint}. Note that the velocity targeting constraint can be used in place of this, which will yield the same result. For convergence improvements, a new control variable associated with $\sigma$ is introduced $u = \left[ u_1, \quad u_2 \right] = \left[ \cos \sigma, \quad \sin \sigma \right]$. Note that this introduces a path constraint $u_1^2 + u_2^2 = 1$. The $\sigma$ magnitude constraint therefore becomes $u_{1, \textrm{min}} \leq u_1 \leq u_{1, \textrm{max}}$, where $u_{1,\textrm{max}} = \cos \sigma_{\textrm{min}}$ and $u_{1, \textrm{min}} = \cos \sigma_{\textrm{max}}$. Similarly, we enforce $u_{2, \textrm{min}} \leq u_2 \leq u_{2, \textrm{max}}$, where $u_{2,\textrm{min}} = \sin \sigma_{\textrm{min}}$ and $u_{2, \textrm{max}} = \sin \sigma_{\textrm{max}}$. A similar approach is taken in Ref. \cite{han2019rapid}. Bank angle is bounded between 15 and 165 degrees, and angle of attack is constrained between -10 and -25 degrees. This range is selected to maintain accuracy of the aerodynamic models, consistent with the analysis done in Sec. \ref{subsec: aero fits}. The cost to be minimized is $\Delta V$. Because we enforce the apoapsis targeting constraint, we eliminate the need for an apoapsis correction burn. Thus, the $\Delta V$ cost implemented here consists only of the first term of Eq. \eqref{eq:dv}. This combination was found to exhibit the strongest convergence properties. Linear aerodynamic models are implemented according to Eq. \eqref{eq: linear aero model}, and quadratic models are implemented according to Eq. \eqref{eq: quadratic aero model} using the parameters given in Table \ref{tab:aeroCoeffs}. Initial guesses are obtained by propagating the dynamics given the initial state, using a constant $\alpha$ profile, and a bang-bang $\sigma$ profile with a user-supplied $t_\sigma$. The atmospheric density profile is constructed via polynomial fit to the natural logarithm of the Uranus-GRAM nominal density profile. 

Two GPOPS solutions are presented, one using the linear aerodynamic model (Fig. \ref{fig: linear gpops}), and one using the quadratic model (Fig. \ref{fig: quadratic gpops}). Both cases use the nominal entry condition, such that $r_0$ = 1000 km, $\gamma_0$ = -10.79 degrees, and $V_0$ = 24.9 km/s. Once the solutions are found, the state and costate information is extracted to plot the analytic switching curves (linear case) and control solution (quadratic case) alongside the optimal control $\alpha$ and $\sigma$ trajectories found by GPOPS. 

Let us first consider the \textit{linear} case (Fig. \ref{fig: linear gpops}). Here, the optimal control solution exhibits a bang-bang $\alpha$ and $\sigma$ structure, where switches are dictated by the switching curves $H_{\alpha, \textrm{up}}$ and $H_{\alpha, \textrm{down}}$. The vehicle begins flying lift-up, with $u_1 \approx 1$. Therefore, we expect $H_{\alpha, \textrm{up}}$ to govern the first $\alpha$ switch. $H_{\alpha, \textrm{up}}$ crosses zero at approximately $t = 265$ seconds, and $\alpha$ switches from $\alpha_{\textrm{min}}$ to $\alpha_{\textrm{max}}$ at about the same time. We next expect a $\sigma$ switch at the second switching event. The point at which $H_\sigma$ crosses zero is equivalent to the switching curve intersection $H_{\alpha, \textrm{up}} = H_{\alpha, \textrm{down}}$, which occurs near $t = 271$ seconds. We see that $\sigma$ switches from $\sigma_{\textrm{min}}$ to $\sigma_{\textrm{max}}$ ($u_1 \approx -1$) at this time. Because the vehicle is now flying lift-down, $u_1 \approx -1$, and we expect a second $\alpha$ switch when $H_{\alpha, \textrm{down}}$ crosses zero. This occurs at $t \approx 276$, and the corresponding $\alpha$ switch happens at that time.    

In the \textit{quadratic} case (Fig. \ref{fig: quadratic gpops}), the optimal control solution defines $\alpha^*$ directly via $\alpha^*$ = $\mathcal{A}_{\textrm{up/down}}$, when $\mathcal{A}_{\textrm{up/down}}$ is within control magnitude constraints. The vehicle begins flying lift-up, so we expect the control to be defined by $\mathcal{A}_\textrm{up}$. Because the curve exceeds $\alpha_\textrm{min}$, $\alpha = \alpha_\textrm{min}$. At about $t=259$, $\alpha_\textrm{min}$ < $\mathcal{A}_\textrm{up}$ < $\alpha_\textrm{max}$, so the $\alpha$ command begins to follow $\mathcal{A}_\textrm{up}$, as predicted. Angle of attack saturates at $\alpha_\textrm{max}$ when $\mathcal{A}_\textrm{up}$ > $\alpha_\textrm{max}$ around $t = 268$. Similar to the linear case, we expect a $\sigma$ switch when $\mathcal{A}_\textrm{up} = \mathcal{A}_\textrm{down}$, which occurs near $t=271$. Bank angle switches from $\sigma_{\textrm{min}}$ to $\sigma_{\textrm{max}}$ at this time. Since the vehicle is flying lift-down ($u_1 \approx -1$), we next expect $\mathcal{A}_{\textrm{down}}$ to define $\alpha^*$. As expected, the angle of attack profile follows $\mathcal{A}_{\textrm{down}}$ when $\alpha_\textrm{min} < \mathcal{A}_{\textrm{down}} < \alpha_\textrm{max}$.

Despite no restriction on the control structure in GPOPS, the numerical results are consistent with the control theory presented previously in both the linear and quadratic case. This suggests that the theorems presented in Sec. \ref{sec: ocp solutions} are plausibly valid under our set of assumptions. The following section presents an aerocapture guidance algorithm that aims to exploit both aspects of the optimal control solution. 

\begin{figure}[t!]
	\centering 
	\begin{minipage}[b]{0.49\textwidth}
		\centering 
		\subcaptionbox{Linear aerodynamic model. Analytic switching functions (top), optimal $\sigma$ (center) and $\alpha$ (bottom) profiles from GPOPS control, state, and costate solution.  \label{fig: linear gpops}}{
			\includegraphics[width = \textwidth]{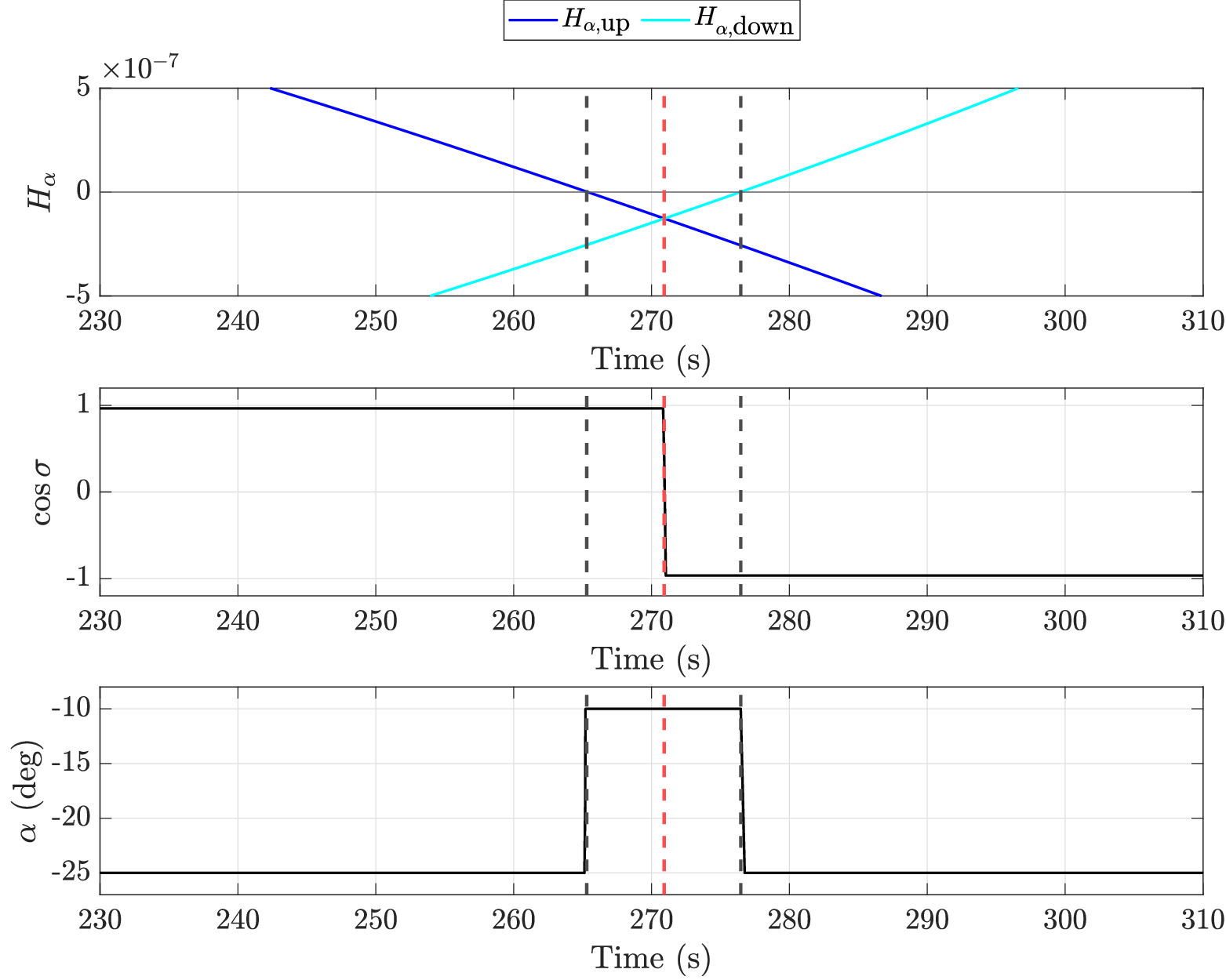}
		}
	\end{minipage}\hfill
	\begin{minipage}[b]{0.49\textwidth}
		\centering 
		\subcaptionbox{Quadratic aerodynamic model. Analytic control functions (top), optimal $\sigma$ (center) and $\alpha$ (bottom) profiles from GPOPS control, state, and costate solution.  \label{fig: quadratic gpops}}{
			\includegraphics[width = \textwidth]{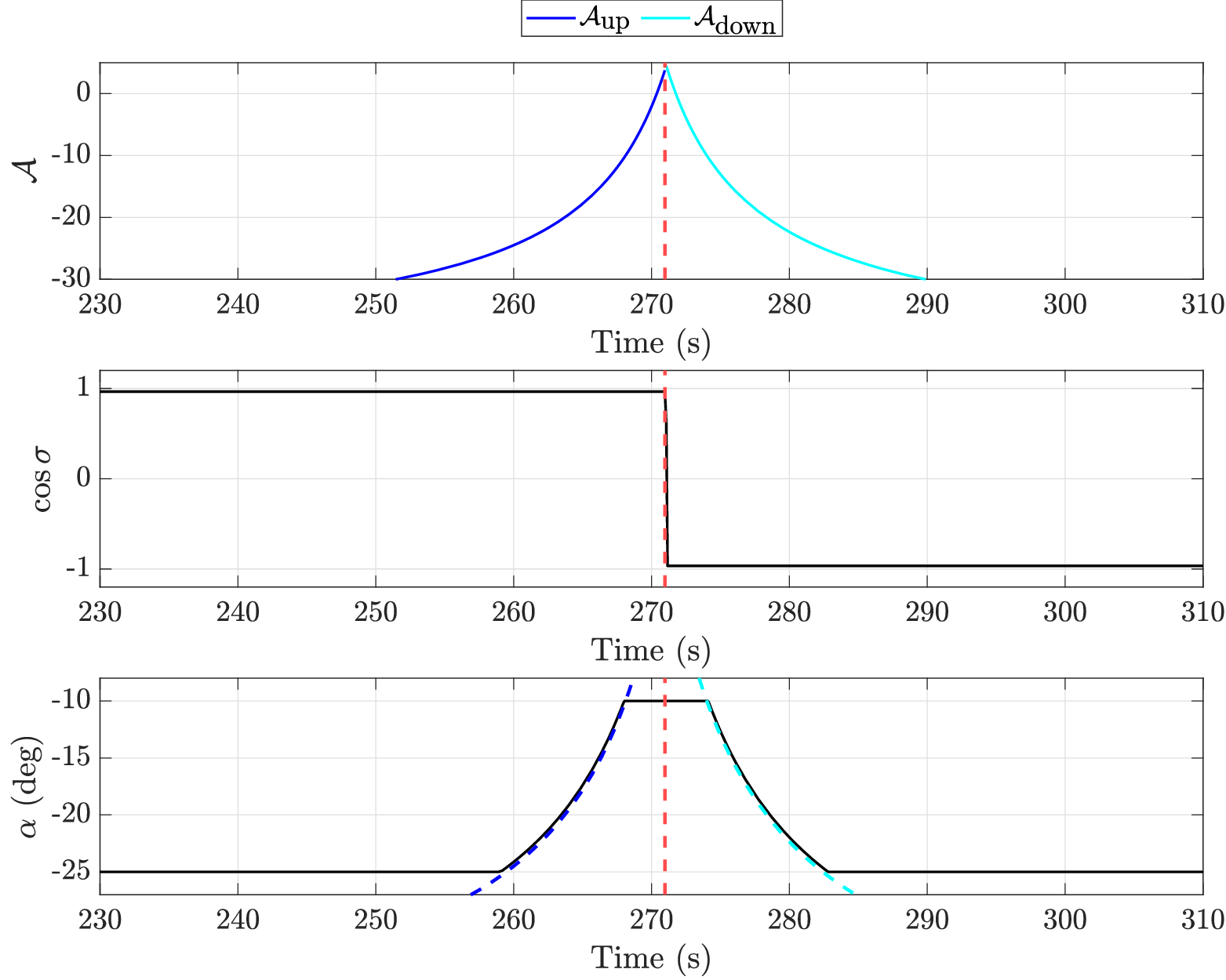}
		}
	\end{minipage}
	\caption{GPOPS results. Black vertical dashed lines correspond to $\alpha$ switching times, and red vertical dashed lines correspond to $\sigma$ switching times. With both models, the analytic and numerical solutions are consistent: in Fig. \ref{fig: linear gpops}, analytic switching times (zero crossings and curve intersection in top panel) align closely with the switching behavior in the GPOPS control profiles. In Fig \ref{fig: quadratic gpops}, the analytic control profiles (top panel) and bank angle switch (red dashed line) match the GPOPS control profiles.}
	\label{fig: GPOPS joint fig}
\end{figure}

\section{Guidance Algorithm Design}
In this section we present ABAMGuid+, an aerocapture guidance algorithm based on the optimal control solutions presented in the previous section. To account for trajectory dispersions in a way that fully utilizes the benefits of adding angle of attack control to BAM, we introduce continuous alpha-sigma modulation (CASM), a novel subprocess inspired by the quadratic optimal control solution that improves the vehicle's ability to respond to off-nominal conditions. 

\subsection{Four-Phase Formulation}
ABAMGuid+ is a predictor-corrector algorithm that consists of four phases, each intended to exploit a specific part of the optimal control solution. A schematic of the control structure is given in Fig. \ref{fig: 4 phases}. 
The first three phases are intended to approximate the three-switch bang-bang control structure dictated by Theorem \ref{theorem: theorem 1} and Remark \ref{remark 3}, which was derived using the linear aerodynamic model in Problem \ref{problem: P2}. In these phases, the guidance predictor uses the bang-bang profile in Fig. \ref{fig: 4 phases}, but assumes continued saturation in phase 4. 
This structure is used due to the strength of the linear approximation, the simplicity of the resulting control profile, and the demonstrated robustness of this approach in \cite{sonandres2025aerocapture}.
By attempting to fly the optimal control solution,  $\Delta V$ minimization is incorporated indirectly, while enabling an efficient implementation that avoids having to solve the constrained, nonlinear optimal control problem each time the guidance system is invoked (which is intractable due to the nonconvexity of the problem).
Following the three switches, a new, unsaturated solution methodology is necessary, as maintaining saturated control for the remainder of the trajectory defines an open-loop policy that leaves guidance with no ability to respond to dispersions caused by flying through a stochastic atmosphere. This closed-loop terminal-phase control is done with CASM, which is intended to approximate the unsaturated control structure given in Theorem \ref{theorem: theorem 2}, which was derived using the quadratic aerodynamic model in Problem \ref{problem: P3}.
ABAMGuid+ is summarized in Algorithm \ref{alg: abamguid+}.
To account for modeling uncertainty, we deploy the filtering technique explained in \cite{lu2015optimal} to estimate the deviation of the sensed lift and drag accelerations from the expected accelerations based on a nominal model.

\begin{figure}[h!]
	\centering 
	\includegraphics[width = 0.6\textwidth]{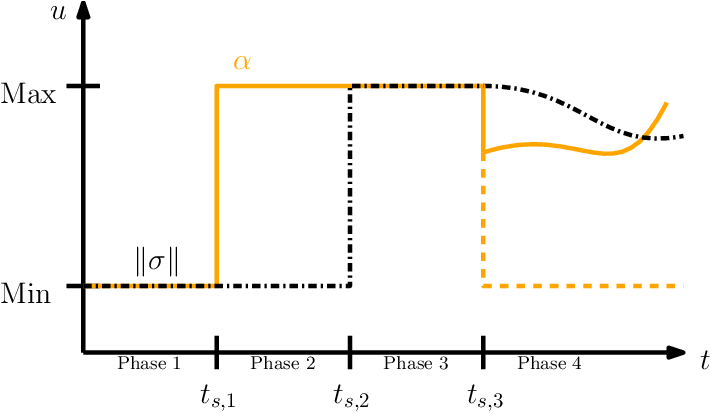}
	\caption{Illustration of the four-phase control structure used in ABAMGuid+. The nominal profile consists of three bang-bang switching phases, followed by an unsaturated terminal phase to account for trajectory dispersions. The dashed orange line represents the profile used in our previous method (ABAMGuid), which maintains a saturated angle of attack in the final phase. ABAMGuid+ has enhanced ability to respond to terminal phase dispersions by performing unsaturated control of both channels.}  
	\label{fig: 4 phases}
\end{figure}

\begin{algorithm}[t]
\caption{ABAMGuid+: Augmented Bank Angle Modulation aerocapture Guidance} \label{alg: abamguid+}
\begin{algorithmic}[1]
\Require Current vehicle state $\mathbf{x}$, current switching time solutions $t_{s,1}, t_{s,2}, t_{s,3}$, previous control $u_\textrm{prev}$
\Ensure Commands for angle of attack $\alpha$ and bank angle $\sigma$

\If{$t < t_{s,1}$}
    \State \textbf{Phase 1:}
    \State Set $\alpha \gets \alpha_{\min} $, $\sigma \gets  \sigma_{\min} $
    \State Solve for switching times $t_{s,1}, t_{s,2}, t_{s,3}$ using Nelder-Mead to minimize chosen objective function

\ElsIf{$t < t_{s,2}$}
    \State \textbf{Phase 2:}
    \State Set $\alpha \gets \alpha_{\max}$, $\sigma \gets \sigma_{\min} $
    \State Solve for switching times $t_{s,2}, t_{s,3}$ using Nelder-Mead to minimize chosen objective function

\ElsIf{$t < t_{s,3}$}
    \State \textbf{Phase 3:}
    \State Set $\alpha \gets  \alpha_{\max} $, $\sigma \gets \sigma_{\max} $
    \State Solve for final switching time $t_{s,3}$ using Newton-Raphson to minimize chosen objective function

\Else
    \State \textbf{Phase 4:} Invoke CASM subroutine to compute potentially unsaturated $(\alpha, \sigma)$
    \State $(\alpha, \sigma) \gets \textsc{CASM}(\mathbf{x}, u_\textrm{prev})$
\EndIf

\Return $(\alpha, \sigma)$
\end{algorithmic}
\end{algorithm}

\textbf{Phase 1} is defined over $t_0 \leq t < t_{s,1}$, which represents the interval from atmospheric entry interface to the first switching time, $t_{s,1}$, and is initially activated via a load trigger set to 0.1 g's. 
The load trigger ensures that there is sufficient control authority to begin active guidance. During this phase, the guidance predictor assumes the bang-bang bank angle and angle of attack control trajectories previously presented, parameterized by three switching time variables ($t_{s,1}$, $t_{s,2}$, $t_{s,3}$). At each guidance call, we use the Nelder-Mead method \cite{nelder1965simplex} to solve for the switching times which minimize an objective function based on the velocity targeting error. The objective function is
\begin{equation} \label{eq: NM objective}
f(t_{s,1}, t_{s,2}, t_{s,3}) = \frac{1}{2} \left( V_\text{exit, pred} - V_\text{exit}^* \right)^2,
\end{equation}
where $ V_\text{exit, pred}$ is the predicted velocity at atmospheric exit and $V_\text{exit}^*$ is the target exit velocity. A zeroth-order method is selected to avoid numerical issues possible when inverting the Jacobian of $f$, which is required for methods such as Gauss-Newton, used in \cite{lu2014entry}. The Nelder-Mead simplex $S$ is initialized by adding a constant value $\Delta t$ to each component of the initial guess ($t_{s,1}^0$, $t_{s,2}^0$, $t_{s,3}^0$), resulting in a simplex of $n=4$ points. Temporal consistency of the three switching times is enforced by penalizing violations of $t_{s,1} \leq t_{s,2} \leq t_{s,3}$ with large function values. Iteration is stopped once the stopping criteria is met
\begin{equation}
\text{std} \left( S_f \right) < \epsilon_{NM},
\end{equation}
where $S_f$ are the function values of each point in the simplex, std denotes standard deviation, and $\epsilon_{NM}$ is a preselected satisfactorily small value, set to 1 for velocity targeting. During phase 1, guidance commands $\sigma_\text{min}$ and $\alpha_\text{min}$, which reflects the initial control structure assumed in Remark \ref{remark 3}.

\textbf{Phase 2} spans the interval $t_{s,1} \leq t < t_{s,2}$. Similar to phase 1, we seek to find $t_{s,2}$ and $t_{s,3}$ which minimize Eq. \eqref{eq: NM objective} using the Nelder-Mead method. Because $t \geq t_{s,1}$, $t_{s,1}$ no longer has an impact on the objective function. Therefore phase 2 presents an optimization problem over two variables, reducing the computational complexity of the problem. The Nelder-Mead simplex size is now $n=3$, and is initialized identically to the previous phase. During phase 2, the guidance system commands $\sigma_\text{min}$ and $\alpha_\text{max}$. The $\alpha$ switch from $\alpha_\text{min}$ in phase 1 to $\alpha_\text{max}$ in phase 2 is dictated by Theorem \ref{theorem: theorem 1} following the zero-crossing of the lift-up switching curve $H_{\alpha, \textrm{up}}$, which is given in Definition \ref{def: alpha switching functions}.

\textbf{Phase 3} is defined from $t_{s,2} \leq t < t_{s,3}$. 
As done in phase 1 and 2, we seek to minimize the velocity targeting error function in Eq. \eqref{eq: NM objective}. 
Since $t$ is now greater than $t_{s,1}$ and $t_{s,2}$, the problem is now further reduced to a univariate optimization problem, as in phase 1 of FNPAG. We therefore minimize the objective function using the Newton-Raphson method and a secant scheme derivative approximation using the two previous iterations, as done in \cite{lu2014entry}. This update is
 
\begin{equation}
    t_{s,3}^{(k+1)} = t_{s,3}^{(k)} - \frac{z(t_{s,3}^{(k)})}{\left[ z(t_{s,3}^{(k)}) - z(t_{s,3}^{(k-1)}) \right]} \left( t_{s,3}^{(k)} - t_{s,3}^{(k-1)}\right),
\end{equation}
where $k$ and $k-1$ are the previous iterations used to compute the updated solution, and $z$ is defined as the velocity targeting error $V_\text{exit, pred} - V_\text{exit}^* $. Iteration is stopped when the stopping criteria is met
\begin{equation}
    \| z(t_{s,3}^{(k+1)}) \frac{\partial z(t_{s,3}^{(k+1)})}{\partial t_{s,3}}  \| \leq \epsilon_{NR},
\end{equation}
where $\epsilon_{NR}$ is a preselected small value. During phase 3, guidance commands $\sigma_{\textrm{max}}$ and $\alpha_{\textrm{min}}$. The $\sigma$ switch from $\sigma_\text{min}$ in phase 2 to $\sigma_\text{max}$ in phase 3 approximates the intersection of the lift-up and lift-down switching curves from Definition \ref{def: alpha switching functions}, which was established to correspond to a bank angle switch in Remark \ref{remark: bank switch}.

\textbf{Phase 4} is defined over $t_{s,3} \leq t < t_f$, where $t_f$ is the time at atmospheric exit. 
Here, we seek a constant magnitude combination of $\alpha$ and $\sigma$ that satisfies the velocity targeting criteria using the CASM subprocess. Finding this combination defines a two-variable control problem. CASM enables the use of Brent's method via reformulation of the two-variable problem into a one-dimensional line search, effectively reducing the complexity of the two-variable problem. This phase is primarily designed to respond to trajectory dispersions by utilizing both control channels in a way that considers the coupling between the commands, thus maximizing effectiveness. The algorithm is detailed in the following section. 
 
\subsection{Continuous Alpha-Sigma Modulation} \label{sec: casm}
The CASM algorithm is summarized in Algorithm \ref{alg: casm}. CASM is inspired by the quadratic optimal control solution in Theorem \ref{theorem: theorem 2}, and more specifically the curve $\mathcal{A}_\textrm{down}$ in Definition \ref{def: optimal alpha curves}, which suggests that there are some lift-down flight regimes where it is beneficial to perform unsaturated $\alpha$ control. The optimal control structure is not directly parameterized, as in phases 1-3, due to the added complexity associated with modeling $\mathcal{A}_\textrm{down}$. Instead, CASM is intended to approximate the unsaturated control $\mathcal{A}_\textrm{down}$ in a way that prioritizes simplicity and efficiency, while maintaining effectiveness. Although this approach may deviate slightly from the optimal trajectory, the ability to directly modulate the lift and drag vectors enables an improved response to dispersions, ultimately yielding significant performance benefits over previous methods. This is demonstrated through simulation in Sec. \ref{sec: guidance performance} and Sec. \ref{sec: mc results}. Intuitively, CASM is beneficial because maintaining a fixed $\alpha$ in the unsaturated control regime reverts the system to traditional BAM (where the magnitudes of the lift and drag vectors are no longer controllable), which diminishes the vehicle's ability to achieve the target orbit in the presence of dispersions. CASM can be broken into two main steps: \textit{initialization}, 
in which we sample the control search space in a way that fully characterizes the set of achievable objective function outcomes, and \textit{solve}, where we find a command that satisfies the targeting criterion (or minimizes error in cases where it is not possible).  
 
 \begin{algorithm}[t!] 
\caption{CASM: Continuous Alpha-Sigma Modulation (Phase 4)} \label{alg: casm}
\begin{algorithmic}[1]
\Require Current vehicle state $\mathbf{x}$, previous control $u_\textrm{prev}$
\Ensure Control pair $(\alpha, \sigma)$

\State Initialize control space using corner points $(\sigma, \alpha) \in \{\sigma_{\min}, \sigma_{\max}\} \times \{\alpha_{\min}, \alpha_{\max}\}$, previous control $(\sigma_{\text{prev}}, \alpha_{\text{prev}})$, and optionally $(\sigma_{\max/\min}, \alpha_{\text{prev}})$
\State Evaluate $f(\sigma, \alpha)$ at each initialization point
\State Select two control points $(\sigma_a, \alpha_a)$ and $(\sigma_b, \alpha_b)$ such that $f_a < 0$ and $f_b > 0$
\If{no valid bracket is found}

    \Return $\underset{u}{\arg \min} \| f(u) \|$ from all evaluated points
\EndIf

\State Define objective: $f(\kappa) = V_{\text{exit,pred}}(\sigma(\kappa), \alpha(\kappa)) - V^*_{\text{exit}}$, where $\sigma(\kappa) = \sigma_a + \kappa(\sigma_b - \sigma_a)$,  $\alpha(\kappa) = \alpha_a + \kappa(\alpha_b - \alpha_a)$, and $\kappa \in [0, 1]$
\State Use Brent's method to find $\kappa^*$ such that $f(\kappa^*) = 0$

\Return $(\alpha(\kappa^*), \sigma(\kappa^*))$
\end{algorithmic}
\end{algorithm}
\textit{Initialization.} We begin by defining the control search space corners using all pairs of $\sigma$ $\in$ $\{ \sigma_{\text{min}}$, $\sigma_{\text{max}}\}$ and $\alpha$ $\in$ $\{ \alpha_{\text{min}}$, $\alpha_{\text{max}} \}$, defining the control point set $\mathcal{I}$. For each control point $u_p = (\sigma_p, \alpha_p) \in \mathcal{I}$, the trajectory is propagated forward and the resulting objective function value $f(u_p) = V_\text{exit, pred} - V^*_\text{exit}$ associated with the point is stored. Assuming that $f(u_p)$ is monotonic and continuous in $\alpha$ and $\sigma$, which is valid given the exit velocity reformulation and restriction of $\alpha$ to negative values, this entirely defines the set of attainable objective function outcomes. Importantly, this tells us if a solution that satisfies the velocity targeting criterion exists ($\exists u_x, u_y \in \mathcal{I}, \text{ s.t. } f(u_x) \cdot f(u_y) < 0$). The previously commanded guidance solution, ($\sigma_{\text{prev}}$, $\alpha_{\text{prev}}$), is stored in $\mathcal{I}$ as a fifth initialization point, along with its recomputed targeting function value. Optionally, we initialize a sixth solution in $\mathcal{I}$ at ($\sigma_{\text{max/min}}$, $\alpha_{\text{prev}}$) depending on the objective value of the previous solution. This sixth evaluation is not strictly necessary for the remainder of the algorithm, but it provides an interpolation option that prioritizes BAM.

\textit{Solve.} We ultimately desire a solution to the velocity targeting function, where we assume a constant $\alpha$ and $\sigma$. So, we wish to find the root of 
\begin{equation} \label{eq: vtarg phase4}
f(\alpha, \sigma) = V_\text{exit, pred} - V_\text{exit}^*.
\end{equation}
We proceed by finding two zero-bracketing control points in $\mathcal{I}$, $u_a = \left(\sigma_a, \alpha_a \right)$ and $u_b = \left(\sigma_b, \alpha_b \right)$, such that $f(u_a) < 0$ and $f(u_b) > 0$. To discourage large control jumps between guidance calls, the previous solution $u_{i-1}$, where $i$ represents the $i$-th guidance call, is used as a bracketing point. That is, $u_a = u_{i-1}$ if $f_{i-1} < 0$, and $u_b = u_{i-1}$ if $f_{i-1} > 0$. The remaining point is selected among the initialization points such that sign($f$) = -sign($f_{i-1}$), and $\| f \|$ is smallest, providing the tightest bracketing. We next define a linear parameterization of $\sigma$ and $\alpha$ as functions of $\kappa$, where $\kappa$ varies from 0 to 1. This relationship is given as
\begin{equation} \label{eq: sigma parameterization}
\sigma(\kappa) = \sigma_a + \kappa (\sigma_b - \sigma_a),
\end{equation}
\begin{equation} \label{eq: alpha parameterization}
\alpha(\kappa) = \alpha_a + \kappa ( \alpha_b - \alpha_a).
\end{equation}
Note that $\kappa=0$ corresponds to $u_a = (\sigma_a, \alpha_a)$, and $\kappa=1$ corresponds to $u_b = (\sigma_b, \alpha_b)$. This linear parameterization enables a single dimensional line search over the placeholder variable, $\kappa$. We can rewrite the objective function as a function of $\kappa$ as 
\begin{equation} \label{eq: parameterized objective}
f(\kappa) = f(\sigma_a + \kappa ( \sigma_b - \sigma_a ), \alpha_a + \kappa ( \alpha_b - \alpha_a) = V_\text{exit, pred} - V^*_\text{exit}.
\end{equation}
Brent's method \cite{brent2013algorithms} is used to find the root of Eq. \eqref{eq: parameterized objective}. Brent's method combines the speed of inverse quadratic interpolation and the secant method with the guaranteed convergence of the bisection method while not relying on computing derivatives. Once $\kappa$ is found, the guidance commands can be recovered through Eq. \eqref{eq: sigma parameterization} and Eq. \eqref{eq: alpha parameterization}, and guidance commands $\sigma(\kappa)$ and $\alpha(\kappa)$. Note that if no bracketing exists, the solve step is skipped and guidance selects a solution from the initialization points that minimizes targeting error, given by
\begin{equation}
u = \underset{u \in \mathcal{I}}{\arg \min} \| f(u) \|.
\end{equation}
As with FNPAG, it is possible to bypass the switching time selection logic entirely, instead solely using CASM for the full duration of the flight. 
However, this approach does not fly the bang-bang control structure derived from solving Problem \ref{problem: P2}, and thus may lead to worse $\Delta V$ performance.

\section{Guidance Performance in Varying Entry Scenarios} \label{sec: guidance performance} 
This section presents individual closed loop Uranus Orbiter and Probe (UOP) aerocapture simulation results. We compare guidance outputs and performance metrics of the proposed algorithm, ABAMGuid+, to FNPAG and ABAMGuid. FNPAG was implemented following the derivation in \cite{lu2015optimal}. ABAMGuid was implemented as in our previous work \cite{sonandres2025aerocapture}, but is adapted to use the exit velocity targeting objective. Two cases are presented. The nominal entry case is discussed in Sec. \ref{subsec: nominal entry}, where each algorithm is expected to perform well.
An off-nominal steep case is presented in Sec. \ref{fig: steep trajectory}, which highlights ABAMGuid+'s ability to successfully capture the vehicle in a case where FNPAG and ABAMGuid do not. 
\subsection{Simulation Methodology}
Throughout this work, aerocapture trajectories are simulated using a high-fidelity 3-DoF simulation developed in MATLAB/ Simulink. The full nonlinear dynamics are simulated at 100 Hz while the guidance system is called at 2 Hz. The vehicle's mass properties follow the Maximum Possible Value (MPV) configuration in \cite{gomez2024design}, and the aerodynamic model is similar to that of the Mars Science Laboratory \cite{shellabarger2025aerodynamics}. Uranus-GRAM \cite{justh2021uranus} is used to model the planet's atmosphere. 
Gravity is modeled assuming a non-spherical body  using spherical harmonic terms up to the J2 coefficient. The on-board atmospheric density model is created by fitting a piece-wise polynomial to the natural log of the nominal GRAM profile. Simulations are classified as successful if the post-aerocapture period is between 10 days and 2.5 years.
Table \ref{table: MC dispersions} summarizes the vehicle properties, initial state conditions, and orbital parameter targets used in simulation. The dispersion ranges represent the ranges of values used for Monte Carlo testing. 
Angle of attack and bank angle are initialized to -17 degrees and -165 degrees, respectively, for each algorithm.
 
\begin{table}[htb]
\small
\caption{Vehicle and mission parameters used in UOP simulation.}
\vspace{-6mm}
\label{table: MC dispersions}
\begin{center}
\resizebox{\textwidth}{!}{
\begin{tabular}{c c | c c}
\toprule
\toprule
\textbf{Mission Parameter} & \textbf{Value} & \textbf{Vehicle Parameter} & \textbf{Value} \\
\midrule
EI Altitude, $h_0$ (km) & 1,000 &
Mass, $m_0$ (kg) & 4,063 \\
Inertial Velocity, $V_0$ (km/s) & 23.78 &
Nominal $L/D$ ratio & $0.25 \pm 0.005$ 3-$\sigma$ \\
Inertial Entry Flight Path Angle, $\gamma_0$ (deg) & $-10.79 \pm 0.189$ 3-$\sigma$ (Baseline) &
Bank Angle Limits (deg) & $15 \leq \|\sigma\| \leq 165$ \\
& $-10.79 \pm 0.622$ 3-$\sigma$ (Conservative) &
Angle of Attack Limits (deg) & $-25 \leq \alpha \leq -10$ \\
Entry Longitude, $\theta$ (deg) & 262.12 &
Angle of Attack Rate Limit (deg/sec) & 5 \\
Entry Latitude, $\phi$ (deg) & -16.02 &
Bank Angle Rate Limit (deg/sec) & 15 \\
Inertial Entry Azimuth Angle, $\psi_0$ (deg) & 117.45 &
 & \\
Target Apoapsis Altitude (km) & 2,000,000 &
 & \\
Target Periapsis Altitude (km) & 4,000 &
 & \\
\bottomrule
\bottomrule
\end{tabular}
}
\end{center}
\end{table}

\subsection{Nominal Entry} \label{subsec: nominal entry}
The first test case is the nominal entry scenario, in which there are no entry or atmospheric dispersions. This serves as an effective validation benchmark, since the unstressful conditions should result in excellent performance. Figure \ref{fig: nominal trajectory} shows results for the nominal entry condition ($\gamma$ = -10.79 degrees) for ABAMGuid+, ABAMGuid, and FNPAG. The upper panels display relevant trajectory information. The top-left plot reveals a typical U-shaped curve that indicates most velocity reduction occurs at lower altitude. The three trajectories are nearly identical, and all achieve the desired exit velocity target. The top-right panel shows the evolution of orbital eccentricity from a hyperbolic approach trajectory ($\epsilon$ > 1) to an elliptical orbit ($\epsilon$ < 1) via the atmospheric pass. Again, each algorithm results in a similar profile ending at the target eccentricity. The bottom panels highlight the difference in control profiles flown by each method. Each algorithm commands a very brief lift-up phase, but transitions into the next phase before the command can be tracked. This is presumably due to guidance tending to skip the lift-up phase as a result of the highly eccentric target orbit and shallow nominal entry condition. FNPAG proceeds to perform bank angle modulation while maintaining a constant angle of attack. ABAMGuid+ and ABAMGuid fly phase 3, commanding lift-down and minimum angle of attack, before entering phase 4, where the two algorithms diverge in behavior. By design, ABAMGuid commands a constant $\alpha$ at maximum magnitude, while ABAMGuid+ solves for a constant $\alpha$-$\sigma$ pair using CASM. The $\Delta V$ required to correct the orbit following the atmospheric pass is 18.9 m/s for ABAMGuid+, 19.4 m/s for ABAMGuid, and 19.4 m/s for FNPAG. This difference is insignificant and each algorithm achieves a captured orbit with minimal $\Delta V$ required for orbital correction. 

\begin{figure}[t!]
\centering
\includegraphics[width=0.8\textwidth]{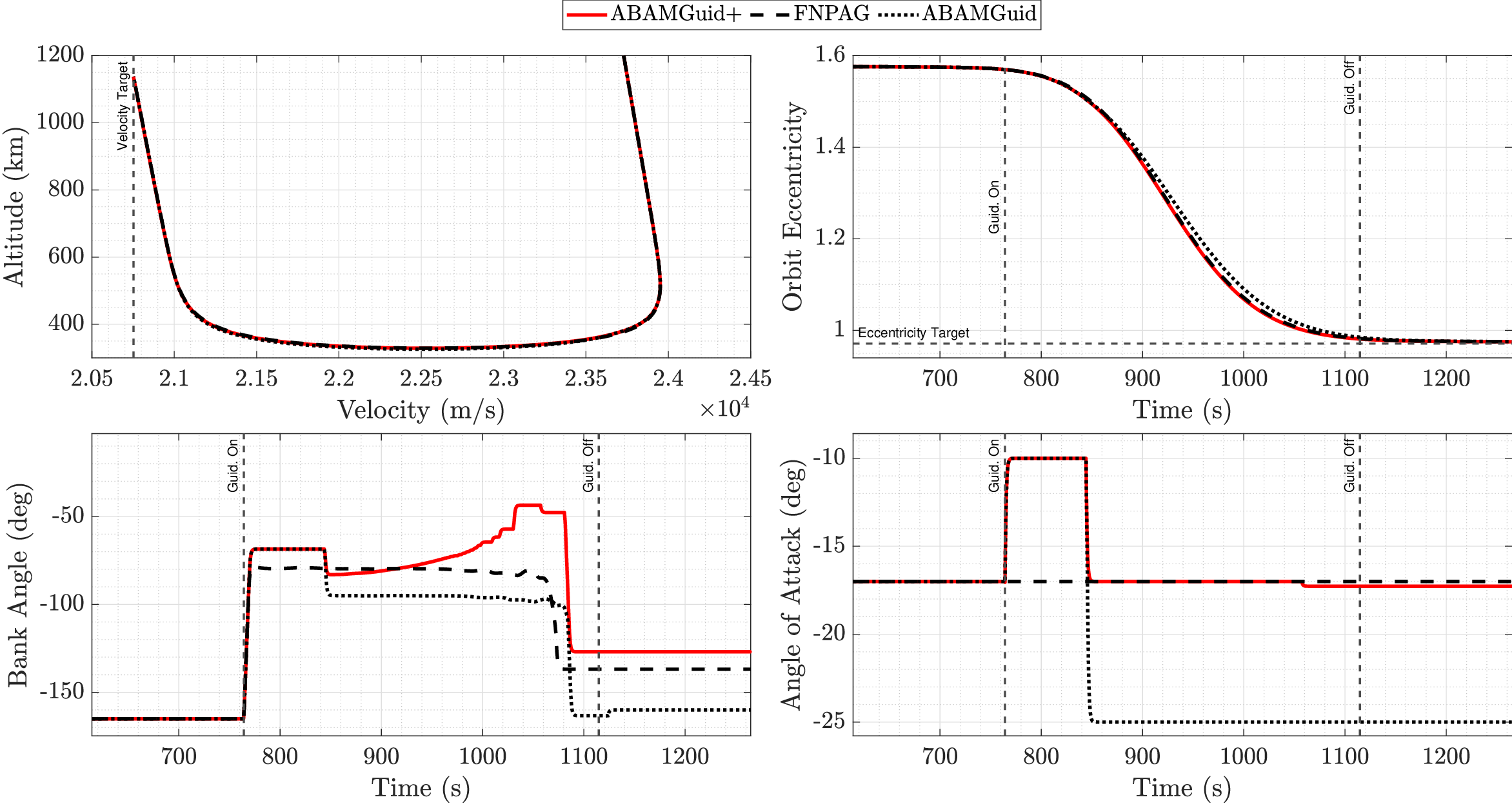} 
\caption{Nominal trajectory ($\gamma$ = -10.79 degrees) results for each algorithm. The altitude vs. velocity profile (top-left) and evolution of orbital eccentricity (top-right) illustrate that each trajectory converges on the target exit velocity and orbital eccentricity. The $\sigma$ and $\alpha$ profiles over time are shown in the bottom-left and bottom-right panels, respectively. } \label{fig: nominal trajectory}
\end{figure}

\subsection{Off-Nominal Entry} \label{subsec: steep entry}
The second test case is an off-nominal, extremely steep entry condition ($\gamma$ = -11.12 degrees). A steep entry flight path angle is challenging due to the large amounts of drag encountered when flying through lower than anticipated altitude ranges. Figure \ref{fig: steep trajectory} illustrates an example trajectory where FNPAG and ABAMGuid fail to capture into the desired orbit, but ABAMGuid+ succeeds. The top-left altitude versus velocity plot shows that FNPAG severely undershoots the exit velocity target. ABAMGuid gets significantly closer to the target, but is still unable to exit the atmosphere with sufficient velocity. ABAMGuid+ is able to maintain slightly more exit velocity, leading to a successful capture. The top-right orbital eccentricity plot confirms this; too much velocity is depleted using FNPAG and ABAMGuid, and the resulting orbital eccentricity is lower than targeted. ABAMGuid+ is again closest to the target. The control trajectories in the bottom panels indicate that FNPAG immediately fully saturates the bank angle channel to fly lift-up, and because there is no ability to perform angle of attack control, FNPAG is performing optimally for a BAM-based guidance algorithm. ABAMGuid and ABAMGuid+ are able to modulate $\alpha$, and therefore add control authority by saturating $\alpha$. Just after 800 seconds, which is a high-density flight regime near periapsis, both algorithms unsaturate their outputs. This is likely due to an atmospheric density perturbation that causes the guidance system to overestimate the remaining control authority. ABAMGuid maintains a maximum magnitude $\alpha$ and commands a $\sigma$ response that it is unable to recover from. ABAMGuid+ similarly responds to the environmental dispersion, but in performing simultaneous $\alpha$-$\sigma$ control via CASM, ABAMGuid+ has increased ability to respond and recover from dispersions, leading to a successful capture. FNPAG results in an orbital period of 1.22 days and a $\Delta V$ requirement of 1273.3 m/s, and ABAMGuid results in an orbital period of 8.36 days with $\Delta V$ = 248.4 m/s. Both are categorized as failed simulations. ABAMGuid+ results in an orbital period of 10.69 days and $\Delta V$ = 189.2 m/s, which is classified as successful. 
 
\begin{figure}[t!]
\centering
\includegraphics[width=0.8\textwidth]{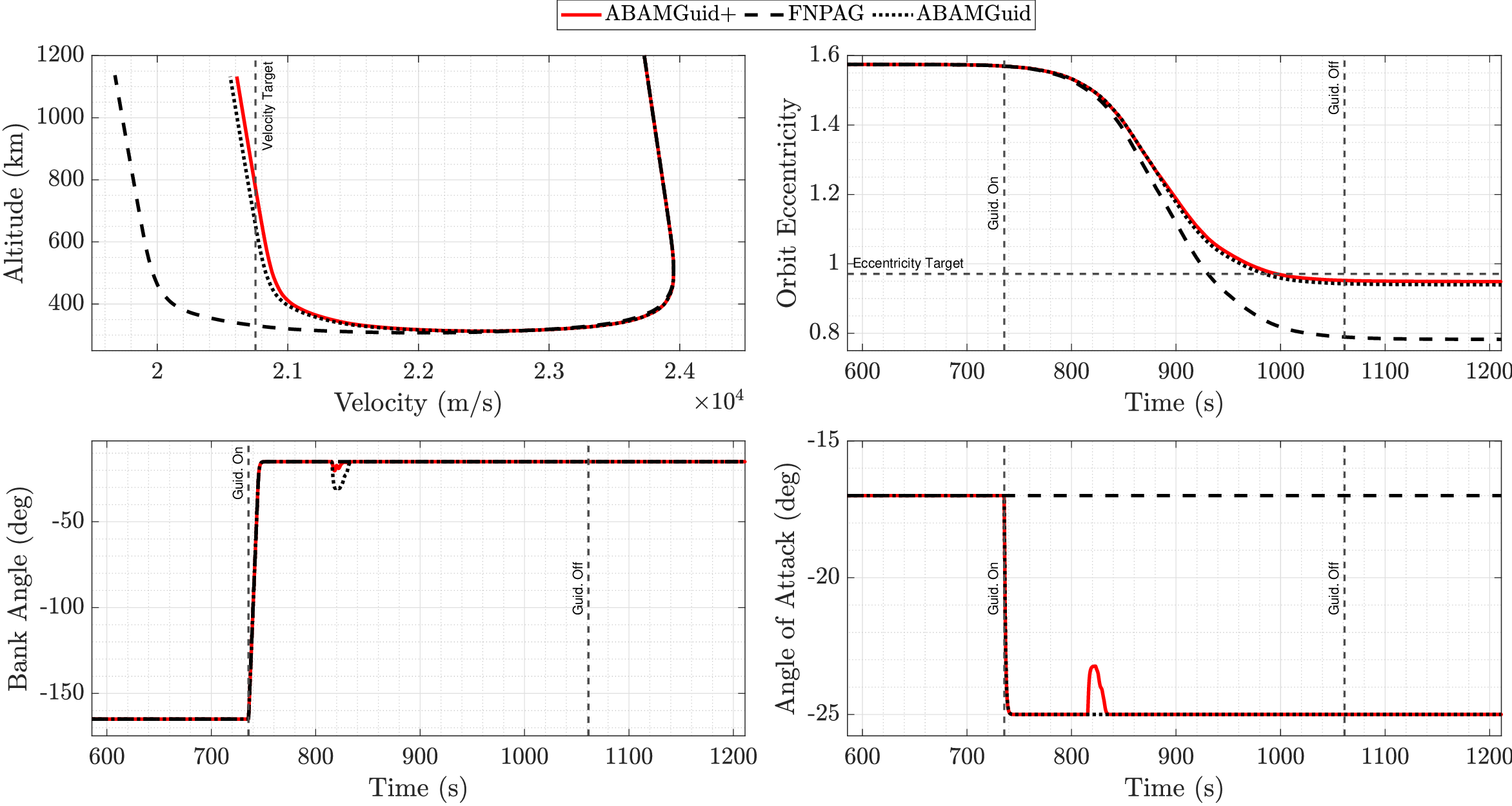} 
\caption{Steep trajectory ($\gamma$ = -11.06 degrees) results for each algorithm. The altitude vs. velocity profile (top-left) and evolution of orbital eccentricity (top-right) illustrate that ABAMGuid+ is closest to the target orbit, while ABAMGuid and FNPAG undershoot the target. The guidance commanded $\sigma$ and $\alpha$ over time are shown in the bottom-left and bottom-right panels, respectively.} \label{fig: steep trajectory}
\end{figure}

\section{Monte Carlo Simulations} \label{sec: mc results}
This section presents Monte Carlo results of a closed-loop Uranus Orbiter and Probe simulation scenario using ABAMGuid+, FNPAG, and our previous algorithm, ABAMGuid. 
\subsection{Monte Carlo Simulation Setup} \label{subsec: MC setup}
Monte Carlo simulations were conducted using ABAMGuid+, FNPAG, and ABAMGuid with 8,000 dispersed entry states, where the arrival states have been sourced from \cite{mages2025mission}. Two entry state sets are used, covering a range of EFPA uncertainty. The \textit{baseline} entry state set has an EFPA uncertainty range representative of a range tolerable by the baseline mission design. In the baseline set, we expect all algorithms to achieve nearly 100\% capture success due to the moderate dispersion levels. The EFPA dispersion range is greatly increased in the worst case \textit{conservative} entry state set. This entry set is selected to stress test the algorithms by subjecting the vehicle to extreme entry conditions. In the conservative set, many trajectories are expected to fall outside the theoretical entry corridor, and it follows that we expect decreased capture success rates. For a more detailed description of the entry state sets, the reader is referred to Ref. \cite{mages2025mission}. For each simulation, the true density profile is varied by sampling the Uranus-GRAM atmosphere model. In all test cases, the vehicle's aerodynamic coefficients are dispersed according to the methodology explained in \cite{schoenenberger2014assessment}.

Three testing cases are presented. Atmospheric dispersions and sensor noise are turned off in \textit{Test Case 1}. This case is intended to identify the theoretical limits of performance by granting the algorithms near-perfect information. Atmospheric dispersions are added in with perfect sensors and state knowledge in \textit{Test Case 2}, which introduces the dominant form of aerocapture uncertainty. Atmospheric dispersions and sensor noise are enabled in \textit{Test Case 3}. This case is intended to ensure the method is robust to imperfect state knowledge. The Monte Carlo simulation parameters and dispersions are summarized in Table \ref{table: MC dispersions}. For each test case, six total Monte Carlo sets are presented: FNPAG baseline set, FNPAG conservative set, ABAMGuid baseline set, and ABAMGuid conservative set, ABAMGuid+ baseline set, and ABAMGuid+ conservative set. For each set, $\Delta V$ statistics are computed using passing simulations. Capture success rates and resulting entry corridor width are also included for comparison. For \textit{Test Case 3}, the vehicle's state is estimated using the Extended Kalman Filter. The navigation filter is initialized with imperfect state knowledge of the initial state. The simulation models a high-quality IMU, described in \cite{chadalavada2025onboard}, incorporating the effects of a constant bias, scale factor uncertainty, axis non-orthogonality, and random walk.

\subsection{Results}
Table \ref{tab: aerocapture performance table} summarizes Monte Carlo performance across the three test cases and two entry sets, comparing ABAMGuid+, ABAMGuid, and FNPAG. In \textit{Test Case 1} baseline entry set, where environmental dispersions are disabled and entry dispersions are small, all three algorithms perform similarly. This is expected given the lack of perturbations and near-perfect knowledge available to each guidance system. In the conservative set, ABAMGuid and ABAMGuid+ also show near-identical performance, indicating that in low-uncertainty conditions the additional CASM logic does not yield significant gains. However, under high-uncertainty conditions (\textit{Test Case 2} and \textit{Test Case 3}), ABAMGuid+ consistently outperforms both ABAMGuid and FNPAG across all metrics. In the baseline sets, ABAMGuid+ achieves reductions in $\Delta V$ 3-$\sigma$ and 99-th \%-ile of about 65\% and 45\%, respectively, compared with ABAMGuid. In the conservative sets, we observe a 23\% improvement in mean $\Delta V$ over ABAMGuid. Across all cases, ABAMGuid+ maintains comparable or improved capture success rates and entry corridor widths, indicating no loss of robustness in any scenario. In the high-dispersion test cases, ABAMGuid+ results in about a 5\% reduction in cases that fail to be captured compared to ABAMGuid and a 50\% reduction compared to FNPAG. ABAMGuid+ reduces mean $\Delta V$ by up to 42\% in comparison to FNPAG in the baseline set (\textit{Test Case 3}), and up to 37\% in the conservative set. In the same test case and entry sets, ABAMGuid+ improves 99-th \%-ile $\Delta V$ by 56\% and 24\% over FNPAG. 

\begin{table}[t!]
\scriptsize
\centering
\caption{Monte carlo performance metrics for each test case and entry set. Bold indicates strongest performance.}
\label{tab: aerocapture performance table}
\begin{tabular}{c c c c c c c c}
\toprule
\toprule
\textbf{Test Case} & \textbf{Entry Set} & \textbf{Algorithm} & \(\Delta V\) Mean (m/s) & \(\Delta V\) 3-\(\sigma\) (m/s) & \(\Delta V\) 99th \%-ile (m/s) & Failures (Pass \%) & Entry Corridor (width) (deg) \\
\midrule
% Test Case 1
\multirow{6}{*}{1} 
 & \multirow{3}{*}{Baseline} 
 & FNPAG       & 19.4 & 0.5 & 19.8 & 0 (100) & --\\
 &                 & ABAMGuid  & 19.4 & 0.5 & 19.8  & 0 (100) & --\\
 &                 & ABAMGuid+  & \textbf{18.9} & \textbf{0.4} & \textbf{19.3} & 0 (100) &  --\\
\cmidrule(lr){2-8}
 & \multirow{3}{*}{Conservative} 
 & FNPAG       & 24.9 & 74.1 & 167.0 & 2032 (74.6) & [-11.002, -10.587] (0.414)\\
 &                 & ABAMGuid  & 20.8 & \textbf{36.9} & 87.4  & \textbf{1002 (87.48)} & [-11.153, -10.565] (0.588)\\
 &                 & ABAMGuid+  & 20.8 & 37.5 & 87.4 & 1004 (87.45) &  \textbf{[-11.158, -10.568] (0.590)}\\
\midrule
% Test Case 2
\multirow{6}{*}{2} 
 & \multirow{3}{*}{Baseline} 
 & FNPAG       & 37.4 & 52.1 & 98.7 & 2 (99.99) & --\\
 &                 & ABAMGuid  & 33.4 & 41.1 & 81.2 & 0 (100) & --\\
 &                 & ABAMGuid+  & \textbf{21.8} & \textbf{14.4} & \textbf{44.9} &0 (100) &-- \\
\cmidrule(lr){2-8}
 & \multirow{3}{*}{Conservative} 
 & FNPAG       & 50.1 & 99.9 & 179.7 & 2358 (70.53) & [-11.013, -10.589] (0.425)\\
 &                 & ABAMGuid  & 41.2 & \textbf{71.4} & \textbf{125.2}  & 1245 (84.44)& [-11.156, -10.574] (0.582) \\
 &                 & ABAMGuid+  & \textbf{32.0} & 72.9 & 136.4 & \textbf{1177} (85.29)  & \textbf{[-11.155, -10.565] (0.590)} \\
\midrule
% Test Case 3
\multirow{6}{*}{3} 
 & \multirow{3}{*}{Baseline} 
 & FNPAG       & 38.0 & 53.5 & 100.2 & 1 (99.99) & --\\
 &                 & ABAMGuid  & 33.9 & 42.1 & 82.8  & 0 (100) & --\\
 &                 & ABAMGuid+  & \textbf{22.2} & \textbf{14.8} & \textbf{44.3} & 0 (100) & --\\
\cmidrule(lr){2-8}
 & \multirow{3}{*}{Conservative} 
 & FNPAG       & 52.2 & 101.3 & 181.1 & 2355 (70.56) & [-11.008, -10.589] (0.419)
\\
 &                 & ABAMGuid  & 43.0 & 74.3 & \textbf{128.3}  & 1257 (84.29) & [-11.161, -10.575] (0.587)
\\
 &                 & ABAMGuid+  & \textbf{33.1} & \textbf{74.0} & 138.0 & \textbf{1189 (85.14)} & [-11.153, -10.566] (0.587)
\\
\bottomrule
\bottomrule
\end{tabular}
\end{table}

Figure \ref{fig: baseline mc figs} compares the $\Delta V$ distribution for the baseline entry set under \textit{Test Case 2} for each algorithm. The left panel shows $\Delta V$ versus EFPA, illustrating that ABAMGuid+ (red) consistently results in lower and more tightly clustered $\Delta V$ values than ABAMGuid (blue) and FNPAG (black). FNPAG and ABAMGuid show a much broader spread across the EFPA range, indicating larger variance in $\Delta V$ outcomes. The CDF in the right panel reinforces this result, as the ABAMGuid+ curve exhibits the sharpest rise and reaches a cumulative probability over 0.95 by approximately 30 m/s, indicating that most simulations achieve low-$\Delta V$ values. In contrast, the black and blue curves rise more gradually, implying that many more trajectories result in high-$\Delta V$ values. These results suggest that ABAMGuid+ is able to achieve stronger exit velocity targeting in the presence of environmental uncertainty due to the subprocess CASM improving the vehicle's ability to respond to dispersions, reducing propellant usage.

\begin{figure}[tb!]
	\centering 
	\includegraphics[width = 0.7\textwidth]{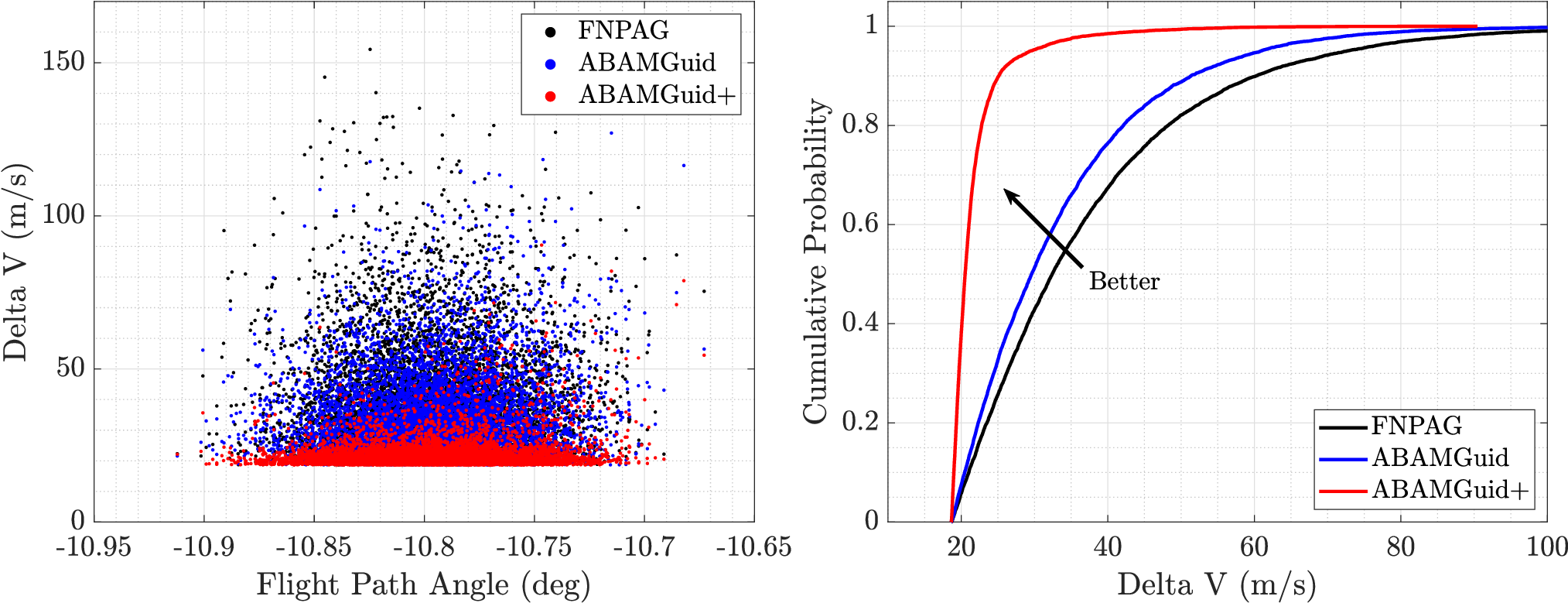}
	\caption{Monte Carlo $\Delta V$ results for the baseline entry state set (Test Case 2). A scatter plot of $\Delta V$ vs EFPA is shown in the left panel, and the $\Delta V$ CDF is shown in the right panel. ABAMGuid+ (red) results in a tighter distribution around low-$\Delta V$ values in comparison to FNPAG (black) and ABAMGuid (blue). Simulations which fail to be captured are omitted. } 
	\label{fig: baseline mc figs}
\end{figure}

Figure \ref{fig: conservative mc figs} presents $\Delta V$ results for the conservative entry set in \textit{Test Case 2}, which increases the range of EFPA dispersions. In the left panel, ABAMGuid+ again results in the lowest and most compact $\Delta V$ outcomes in a range around the nominal EFPA ($-10.9^\circ$ to $-10.7^\circ$). In the steep and shallow edges of the corridor, ABAMGuid+ and ABAMGuid perform similarly. Vertical dashed lines contain 99.7\% of successful simulations, defining the entry corridor width (ECW) for each method. ABAMGuid+ and ABAMGuid produce similar ECWs, with ABAMGuid+ having a slight advantage in shallow entry scenarios. ABAMGuid+ and ABAMGuid result in the same steep corridor boundary in the left of the plot, and as such the red and blue lines are on top of each other. Both methods are significantly wider than FNPAG (black dashed lines), highlighting their improved ability to handle extreme entry scenarios. The CDF in the right panel further confirms the $\Delta V$ advantage ABAMGuid+ achieves compared to ABAMGuid and FNPAG: the red line rises faster than the blue and black lines, indicating an increase in low-$\Delta V$ simulation results. 

\begin{figure}[tb!]
	\centering 
	\includegraphics[width = 0.7\textwidth]{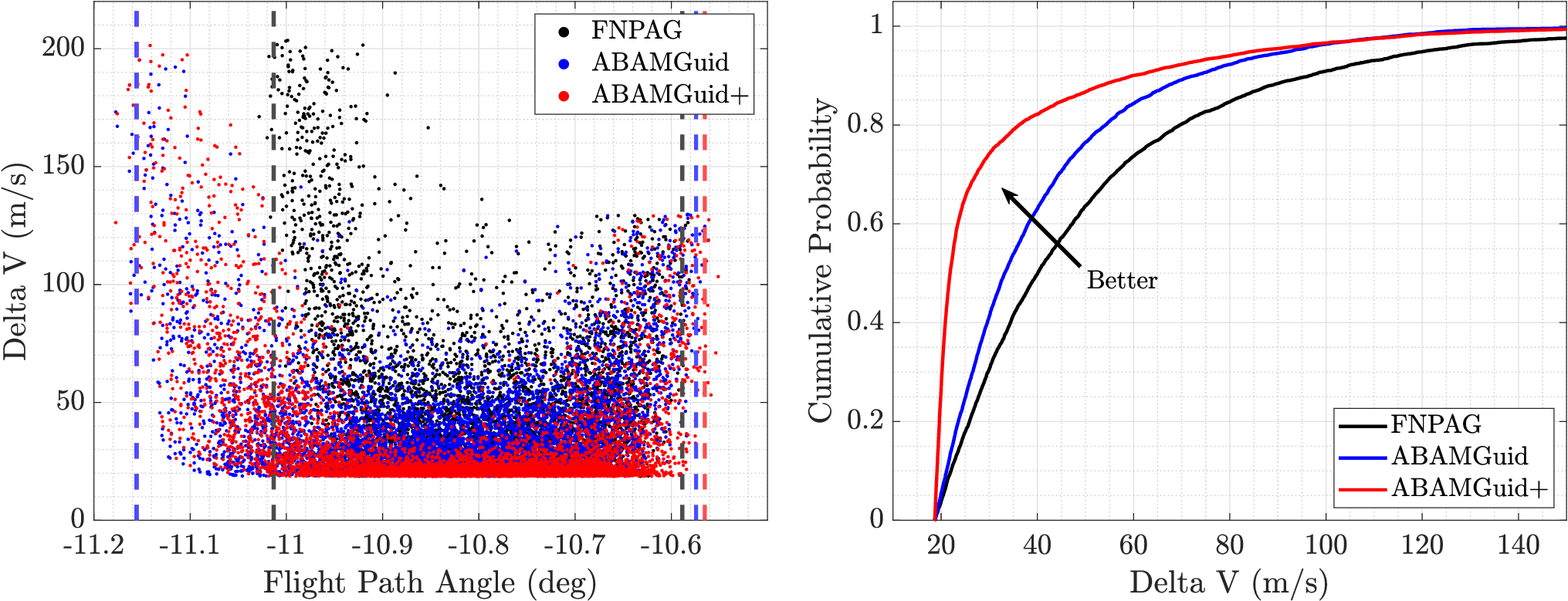}
	\caption{Monte Carlo $\Delta V$ results for the conservative entry state set (Test Case 2). A scatter plot of $\Delta V$ vs EFPA is shown in the left panel, and the $\Delta V$ CDF is shown in the right panel. In comparison to ABAMGuid (blue) and FNPAG (black), ABAMGuid+ (red) results in generally lower $\Delta V$ with higher probability.  Vertical dashed lines contain 99.7\% of successful runs, defining the entry corridor width for each algorithm. ABAMGuid+ results in a similar ECW as ABAMGuid, which are both significantly wider than FNPAG. Simulations which fail to be captured are omitted.} 
	\label{fig: conservative mc figs}
\end{figure}

Figure \ref{fig: apoapsis targeting} illustrates the achieved apoapsis radius of the post-aerocapture orbit for each algorithm in the baseline (right) and conservative (left) entry state sets under \textit{Test Case 2}. Although the algorithms are configured to target exit velocity, each algorithm is able to achieve a distribution centered around the apoapsis target radius. This confirms that the exit velocity reformulation is an effective approximation for this problem. For both entry sets, ABAMGuid+ achieves a tighter clustering around the desired apoapsis, indicating that ABAMGuid+ is most effective at delivering the vehicle into the specific orbit. While this section presents figures for only \textit{Test Case 2}, since they are representative of the broader trends, the complete figure set is included in \cite{sonandres2025thesis}.

\begin{figure}[tb!]
	\centering 
	\includegraphics[width = 0.7\textwidth]{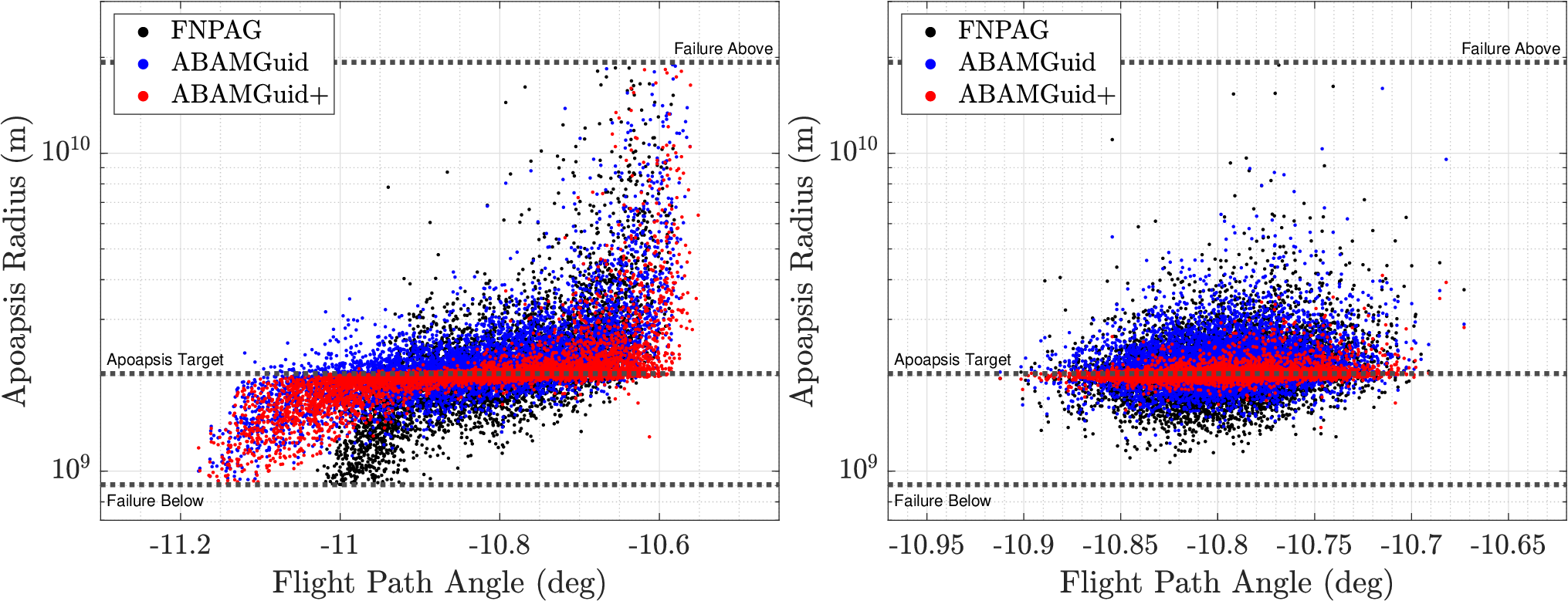}
	\caption{Monte Carlo apoapsis targeting results for the conservative (left panel) and baseline (right panel) entry sets (Test Case 2) ABAMGuid+ (red) achieves the tightest clustering around the target apoapsis radius across both entry state sets. Simulations which fail to be captured are omitted.} 
	\label{fig: apoapsis targeting}
\end{figure}

\section{Conclusions}
In this paper we present ABAMGuid+, an aerocapture guidance algorithm for augmented bank angle modulation that improves upon our previous algorithm, ABAMGuid, via a new algorithm subprocess to perform unsaturated control of $\alpha$ and $\sigma$ (CASM). We derived propellant optimal control profiles for ABAM using both linear and quadratic aerodynamic models, the solutions of which both influenced the design of ABAMGuid+. The linear solution revealed a three-switch bang-bang solution, and the quadratic solution revealed an unsaturated $\alpha$ control law that motivated development of CASM. Both optimal control solutions were validated by solving the constrained nonlinear optimization problem in GPOPS. Last, ABAMGuid+ was rigorously tested through high-fidelity Monte Carlo Analysis, which showed $\Delta V$ and capture success gains over the previous methods. Future work should assess the impact of path constraints and lateral guidance, both of which are not considered in this work. 

\bibliography{references.bib}

\end{document}